\newtheoremstyle{mythm}{1.5ex plus 1ex minus .2ex}{1.5ex plus 1ex
minus .2ex}{\kai}{\parindent}{\song\bfseries}{}{1em}{}
\numberwithin{equation}{section}
\newtheorem{theorem}{Theorem}[section]
\newtheorem{lemma}{Lemma}[section]
\newtheorem*{lemma*}{Lemma}
\newtheorem{remark}{Remark} [section]
\begin{document}\title{{\textbf{A priori bounds for a class of semi-linear degenerate elliptic equations}}}
\author{Genggeng Huang\footnote{genggenghuang@fudan.edu.cn}}
\date{}
\maketitle
\begin{center}
(School of Mathematics Sciences,  LMNS,  Fudan University, Shanghai,
China,  200433)
\end{center}
\begin{abstract}
In this paper, we mainly discuss   a priori bounds of the following degenerate elliptic equation, \begin{equation}\label{000}
a^{ij}(x)\partial_{ij}u+b^i(x)\partial_i u +f(x,u)=0,\text{ in
}\Omega\subset\subset R^n,
\end{equation} where $a^{ij}\partial_i \phi\partial_j \phi=0$ on $\partial \Omega$, $\phi$ is the defining function of $\partial \Omega$. Imposing suitable conditions on the coefficients and $f(x,u)$, one can get the $L^\infty$-estimates of \eqref{000} via blow up method.
\end{abstract}
\par Key Words: degenerate elliptic, characteristic, semi-linear elliptic
\par MSC2010: \  \   \   {35J15, 35J61, 35J70}
\section{Introduction}
\setcounter{section}{1} \setcounter{equation}{0}
\setcounter{theorem}{0}\setcounter{lemma}{0}
In the study of the existence and regularity of the elliptic equations, a priori estimates play an important role at least as far as Schauder's work in 1930s. For the uniformly linear elliptic equations, one can get the $L^\infty-$estimates using the barrier function. But for non-linear elliptic equations, this method  fails. In \cite{GS2}, the authors use a scaling argument reminiscent to that used in the theory of Minimal Surfaces to get the a priori estimates for some uniformly semi-linear elliptic equations. This inspires us to consider the degenerate case.
We mainly consider  a priori bounds of the following degenerate elliptic equation which arises from the study of isometric embedding, \begin{equation}\label{101}
a^{ij}(x)\partial_{ij}u+b^i(x)\partial_i u +f(x,u)=0,\text{ in
}\Omega\subset\subset R^n.
\end{equation} Let $\phi\in C^2$ be the defining function of $\partial
\Omega$, namely,
\begin{equation}\label{102}\phi|_{\partial\Omega}=0,
\nabla\phi|_{\partial \Omega}\neq 0,\phi>0\text{ in  } \Omega\cap\mathcal{N}(\partial \Omega)
\end{equation}where $\mathcal{N}(\partial \Omega)$ is a neighborhood of $\partial \Omega$. Also, suppose that \begin{equation}\label{103}
a^{ij}\partial_i\phi\partial_j\phi=0,\nabla(a^{ij}\partial_i\phi\partial_j\phi)\neq 0
\text{ on }\partial \Omega
\in C^{2}
\end{equation} and that for the  eigenvalues of $\{a^{ij}\}$  $\lambda_1$,..., $\lambda_n$, there hold, for some constant $c_0$,
\begin{equation}\label{104} \lambda_1,...,\lambda_{n-1}\geq
c_0>0,\lambda_{n}=m(x)\phi, 0<m(x)\in C(\bar \Omega).
\end{equation}\begin{theorem}\label{thm101}Let \eqref{102}, \eqref{103} and \eqref{104} be
fulfilled.
Suppose that $0<u\in C^2(\Omega)\cap L^{\infty}(\Omega)$ solves
\eqref{101} and that  $a^{ij}\in
C^{2}(\bar{\Omega}),b^i\in C^{1}(\bar{\Omega})$ and $f(x,t)\in C(\bar{\Omega}\times[0,\infty))$
\begin{equation}
\label{604}\underset{t\rightarrow
\infty}{\lim}\frac{f(x,t)}{t^\alpha}=h(x), \text{ uniformly for some
} 1<\alpha<\frac{n+2a+1}{n+2a-3},
\end{equation} where
$0<h(x)\in C(\bar{\Omega})$,\begin{equation}\label{a101}a=\displaystyle \underset{\partial\Omega}{\sup}\frac{b^i\partial_i\phi-\partial_ja^{ij}\partial_{i}
\phi}{\partial_ka^{ij}\partial_i\phi\partial_j\phi\phi^k}+1,
b\triangleq\underset{\partial\Omega}{\inf}\displaystyle\frac{b^i\partial_i\phi-\partial_ja^{ij}\partial_{i}
\phi}{\partial_{k}a^{ij}\partial_i
\phi\partial_j\phi\phi^k}>\frac 12,\phi^k=\frac{\partial_k\phi}{|\nabla \phi|^2}.
\end{equation}
  Then it follows that \begin{equation}\label{888}|u|_{L^\infty}\leq
C\text{ for some constant } C \text{ independent of } u.\end{equation}

\end{theorem}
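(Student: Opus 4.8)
\emph{Sketch of the proof.} The plan is to argue by contradiction by means of the Gidas--Spruck blow-up (rescaling) method \cite{GS2}, the new feature being the degeneracy of \eqref{101} along $\partial\Omega$. Suppose \eqref{888} fails; then there are positive solutions $u_k\in C^2(\Omega)\cap L^\infty(\Omega)$ of \eqref{101} with $M_k:=\|u_k\|_{L^\infty(\Omega)}\to\infty$. Since $u_k$ is not assumed continuous up to $\partial\Omega$, I would first pick $\widetilde x_k\in\Omega$ with $u_k(\widetilde x_k)\ge\tfrac12 M_k$ and then apply a doubling argument (of Polacik--Quittner--Souplet type) to obtain a point $x_k\in\Omega$ which is an approximate local maximum at the right scale: with $\mu_k:=u_k(x_k)^{-(\alpha-1)/2}\to0$ one has $u_k(x_k)\ge u_k(\widetilde x_k)\to\infty$ and $u_k\le 2u_k(x_k)$ on $B_{R_k\mu_k}(x_k)$, $R_k\to\infty$. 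Put $v_k(y):=u_k(x_k)^{-1}u_k(x_k+\mu_k y)$, so that $v_k(0)=1$, $0\le v_k\le2$; then $v_k$ solves the rescaled equation, whose zeroth-order term converges locally uniformly to $h(x_0)v^\alpha$ by \eqref{604} and the identity $\mu_k^2u_k(x_k)^{\alpha-1}=1$, where $x_0:=\lim x_k\in\bar\Omega$.

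The second step is a dichotomy governed by $\tau_k:=\phi(x_k)/\mu_k^2$, the square appearing because by \eqref{103}--\eqref{104} the degenerate eigenvalue is $\lambda_n=m\phi\sim m\,\mathrm{dist}(\cdot,\partial\Omega)$ near $\partial\Omega$, so the natural normal scale is $\mu_k^2$, not $\mu_k$. If $\tau_k\to\infty$ (in particular whenever $x_0\in\Omega$), then after an auxiliary rescaling of the normal variable by $\tau_k^{1/2}$ the rescaled operator is uniformly elliptic on bounded sets with coefficients tending to constants; interior estimates (using $a^{ij}\in C^2$, $b^i\in C^1$) give $v_k\to v$ in $C^2_{loc}$, $0\le v\le2$, $v(0)=1$, solving $\Delta v+c\,v^\alpha=0$ in $R^n$ after diagonalisation, with $c=h(x_0)>0$. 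Because $a>\tfrac12$ one checks $\frac{n+2a+1}{n+2a-3}<\frac{n+2}{n-2}$, so $\alpha$ is Sobolev-subcritical and the Gidas--Spruck Liouville theorem forces $v\equiv0$, contradicting $v(0)=1$.

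It remains to treat the degenerate case $\limsup_k\tau_k<\infty$, so that $x_0\in\partial\Omega$. Here I would pass to boundary-fitted coordinates $(\phi,z)$, $z\in R^{n-1}$ tangential, near $x_0$ and rescale anisotropically, $\phi=\mu_k^2\widehat\phi$ and $z=z_k+\mu_k\widehat z$. By \eqref{103}--\eqref{104} the principal part converges to the model degenerate operator $\kappa\,\widehat\phi\,\partial^2_{\widehat\phi}+\gamma_0\,\partial_{\widehat\phi}+\Delta_{\widehat z}$ on $\{\widehat\phi>0\}\times R^{n-1}$, where $\kappa=(\partial_k a^{ij}\partial_i\phi\partial_j\phi\,\phi^k)(x_0)$ and $\gamma_0=(b^i\partial_i\phi-\partial_j a^{ij}\partial_i\phi)(x_0)$, so that $r:=\gamma_0/\kappa$ is precisely the ratio appearing in \eqref{a101}. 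The substitution $\widehat\phi=t^2$ turns $\kappa\widehat\phi\,\partial^2_{\widehat\phi}+\gamma_0\,\partial_{\widehat\phi}$ into $\tfrac{\kappa}{4}(\partial^2_t+\tfrac{2r-1}{t}\partial_t)$, i.e.\ up to a factor into the weighted Laplacian $v\mapsto t^{1-2r}\,\mathrm{div}_{(t,z)}(t^{2r-1}\nabla_{(t,z)}v)$, whose homogeneous dimension is $n-1+2r$; the hypothesis $b>\tfrac12$ in \eqref{a101} forces $2r>1$, so the degenerate face $\{t=0\}$ is removable for bounded solutions (it behaves like the axis of an $R^{2r}$). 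Passing to the limit --- which requires uniform \emph{weighted} Schauder estimates for the degenerate rescaled operator, available since $b>\tfrac12$ --- yields a bounded $v\ge0$, $v(0)=1$, solving $v_{tt}+\tfrac{2r-1}{t}v_t+c_1\Delta_z v+c_2 v^\alpha=0$ on $\{t>0\}\times R^{n-1}$ with the natural (finite-energy) condition at $\{t=0\}$ (the blow-up centre may lie at $t=0$ or at $t>0$, but the same limiting problem results). The normalisation $a=\sup_{\partial\Omega}(\gamma_0/\kappa)+1$ is designed so that $n-1+2a$ bounds the homogeneous dimension from above, whence $1<\alpha<\frac{n+2a+1}{n+2a-3}=\frac{(n-1+2a)+2}{(n-1+2a)-2}$ is strictly subcritical for this weighted problem for every admissible $r$; a Liouville theorem for the degenerate limiting equation then gives $v\equiv0$, again a contradiction. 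Cases (i) and (ii) together establish \eqref{888}.

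The two genuinely new ingredients, which I expect to be the main obstacles, are: (a) a Liouville theorem for $v_{tt}+\tfrac{2r-1}{t}v_t+c_1\Delta_z v+c_2 v^\alpha=0$ on $\{t>0\}\times R^{n-1}$ for bounded nonnegative $v$ (with $2r>1$ and $\alpha$ subcritical), which I would approach through a Pohozaev-type identity associated with the dilation field $t\partial_t+\widehat z\cdot\nabla_{\widehat z}$ together with the strict margin in $\alpha<\frac{n+2a+1}{n+2a-3}$ (and, if necessary, a Kelvin-type transform to control the behaviour at infinity); and (b) uniform interior and boundary estimates for degenerate operators of the form $\kappa\phi\,\partial^2_\phi+\gamma_0\,\partial_\phi+(\text{uniformly elliptic tangential part})$ with $\gamma_0/\kappa>\tfrac12$, needed for the compactness of $\{v_k\}$ down to $\{t=0\}$. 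A subsidiary but essential point will be to check that the errors produced by the curvature of $\partial\Omega$, by the variation of $m$, $h$ and the coefficients, and by the lower-order term $b^i\partial_i u$ all vanish in the limit at the chosen scales --- this is precisely where the strict inequality $b>\tfrac12$ and the extra ``$+1$'' in the definition of $a$ provide the necessary room.
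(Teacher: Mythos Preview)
Your overall architecture is the same as the paper's: blow-up by contradiction, anisotropic rescaling with normal scale $\mu_k^2$ and tangential scale $\mu_k$, a dichotomy on $\tau_k=\phi(x_k)/\mu_k^2$, the Gidas--Spruck Liouville theorem in the uniformly elliptic case, and a degenerate Liouville theorem on $R^n_+$ in the boundary case. Your identification of the two ``genuinely new ingredients'' (a) and (b) is exactly right; the paper supplies (a) as Lemma~\ref{lem402} (proved in \cite{HU}) and (b) as Theorems~\ref{thm102} and~\ref{thm103}. A few comparative remarks are worth making.

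First, the doubling lemma is unnecessary here. Since $v_k=M_k^{-1}u_k(\cdot)$ (in the paper's normalisation $\lambda_k^{1/(\alpha-1)}M_k=1$) one has $0\le v_k\le 1$ globally, so the rescaled sequence is bounded on the whole rescaled domain, not merely on a ball of growing radius. The paper simply picks $P_k$ with $u_k(P_k)\ge M_k/2$.

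Second, for (b) you propose weighted Schauder estimates. The paper does \emph{not} do this; it only proves H\"older continuity \emph{at} boundary points (Theorem~\ref{thm102}) by a De~Giorgi--Moser argument built on a weighted Sobolev/Poincar\'e inequality obtained via the lift $y\mapsto(y,z)$ with $0<z<y$. This weaker statement is all that is needed in the blow-up: when $\tau_k\to c_0=0$ the rescaled centre $(0',s_n^{P_k}/\lambda_k)$ drifts to the degenerate face, and the sole purpose of Theorem~\ref{thm102} is to guarantee $|v_k(0',s_n^{P_k}/\lambda_k)-v_k(0)|\to 0$, hence $v_k(0)\ge 1/4$ for large $k$, so the limit is nontrivial. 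Your sentence ``the same limiting problem results'' glosses over precisely this step. Uniform $C^2$ compactness of $v_k$ up to $\{y=0\}$ is \emph{not} established for the sequence; instead the paper passes to the limit with only interior estimates and then upgrades the regularity of the \emph{limit} $v$ a posteriori via Theorem~\ref{thm103}, whose proof is an $L^p$ bootstrap using the kernel estimates of \cite{HH} (this is where $a>3/2$, i.e.\ your $r>1/2$ together with the ``$+1$'', is actually used).

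Third, your $t$-form $v_{tt}+\tfrac{2r-1}{t}v_t+c_1\Delta_z v+c_2v^\alpha=0$ is equivalent, via $y=t^2$, to the paper's $yu_{yy}+\bar b\,u_y+\Delta u+u^\alpha=0$; the paper works directly in the $y$-variable and invokes Lemma~\ref{lem402} rather than reproving it by Pohozaev/Kelvin. Either route is fine, but note that Lemma~\ref{lem402} as stated requires $u\in C^2(\overline{R^n_+})$, which is why Theorem~\ref{thm103} is not optional.
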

\begin{remark}\label{remark1}
Define $$g(x)=\frac{b^i\partial_i\phi-\partial_ja^{ij}\partial_{i}
\phi}{\partial_ka^{ij}\partial_i\phi\partial_j\phi\phi^k}\text{ on }\partial \Omega \text{ where } a^{ij}\partial_i\phi\partial_j\phi=0.$$ The invariance of $g(x)$ is proved in \cite{H}. The numerator of $g(x)$ is the well-know Fichera number. The concept of Fichera number is very important when we deal with degenerate elliptic problems with boundary characteristic degenerate. It indicates whether we shall impose boundary condition in such case. This fact was first observed by M.V.Keldy\text{$\check{s}$} in \cite{Kel} and developed by Fichera in \cite{Fic1,Fic2}. The Fichera number also affects the regularities of the solutions up to the boundary, see \cite{H}. For more details of Fichera number, refer to \cite{O}.
\end{remark}
For dimension $n=2$, Theorem \ref{thm101} was proved in \cite{HU}. The main difficulties arise from the degeneracy on the boundary and no boundary condition. In \cite{HU}, we establish regularity results up to the boundary to overcome the difficulties. The method used in \cite{HU} can't improve the regularity for $n\geq 3$. We use De Giorgi-Moser method to get the H\"older regularity on the boundary. It should be emphasized that we didn't get the uniform H\"older regularity in $\bar \Omega$ but on $\partial \Omega$.  \par Also when we consider one of the blow up equation,
\begin{equation}\label{105}
yu_{yy}+au_y+\Delta_x u+u^\alpha=0 \text{ in } R^{n}_+,u\in L^\infty_{loc}(\overline{R^{n}_+}).
\end{equation} During the process of blow up, we know that we can only expect $L^\infty$-regularity of blow up solution $u$ in \eqref{105}. As the Liouville type theorem in \cite{HU} was obtained under the assumption $u\in C^2(\overline{R^n_+})$. Thus we must improve the regularity up to the boundary for \eqref{105}. By carefully deal with the boundary and the energy inequality, we improved the regularity.
Hence, we must have the following two theorem to overcome the difficulties.
 \begin{theorem}\label{thm102}
Let \eqref{102}, \eqref{103} and \eqref{104} be
fulfilled. Moreover, $g(x)>0$ on the boundary $\partial \Omega$ which is defined in Remark \ref{remark1}.
Suppose that $0<u\in C^2(\Omega)\cap L^{\infty}(\Omega)$ solves
\eqref{101} and that  $a^{ij}\in
C^{2}(\bar{\Omega}),b^i\in C^{1}(\bar{\Omega})$ and $f(x,t)\in C(\bar{\Omega}\times[0,\infty))$. Then:\begin{equation}
|u(x)-u(y)|\leq C|x-y|^{\beta},\forall x\in\bar \Omega,y\in
\partial \Omega, \text{ for some }\beta\in (0,1),
\end{equation}where $C,\beta$ depends only on $|u|_{\infty},|a^{ij}|_{C^2},|b^i|_{C^1},|\partial \Omega|_{C^2},|f|_{C^0}$
and $n$.\end{theorem}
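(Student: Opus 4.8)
The plan is to localize near an arbitrary point of $\partial\Omega$, straighten the boundary, rewrite the equation as a degenerate divergence-form equation in the weighted Sobolev space adapted to the degeneracy, and carry out a De Giorgi--Moser iteration on half-balls centred on the boundary; the one genuinely delicate point is to justify that the bounded classical solution is a weak solution of this degenerate equation \emph{up to} $\partial\Omega$, with no spurious boundary term. Fix $x_0\in\partial\Omega$. A $C^2$ change of variables straightens $\partial\Omega$ near $x_0$ so that, in coordinates $(x',t)$ with $x'\in\mathbb R^{n-1}$, one has $\Omega\leftrightarrow B_1^+:=B_1\cap\{t>0\}$, $x_0\leftrightarrow 0$, $\partial\Omega\cap U\leftrightarrow\{t=0\}$, and $\phi$ becomes comparable to $t$. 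Since $(a^{ij})\ge0$ and $a^{ij}\partial_i\phi\partial_j\phi=0$ on $\partial\Omega$, the normal direction is the kernel direction of $(a^{ij})$ there; using \eqref{102}--\eqref{104} the equation becomes, in $B_1^+$, $a^{\mu\nu}\partial_{\mu\nu}u+b^\mu\partial_\mu u+f(x,u)=0$ with $a^{nn}=t\,c(x',t)$, $c>0$; with $(a^{ij})_{i,j<n}$ uniformly elliptic; with $a^{in}=O(t)$ for $i<n$; with all coefficients $C^1$; and with $f(x,u)$ bounded, $|f(x,u)|\le M:=\sup_{\bar\Omega\times[0,\|u\|_\infty]}|f|<\infty$. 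Thus $(a^{\mu\nu})$ is uniformly elliptic for the degenerate frame $(\partial_{x'},\sqrt t\,\partial_t)$ near $\{t=0\}$. Computing the Fichera ratio of \eqref{a101} in these coordinates (its invariance being the content of \cite{H}) shows that, after dividing by $c$, the coefficient of $\partial_t u$ at $t=0$ equals $g(x')+1$, which is $>1$ since $g>0$. Multiplying the equation by the weight
\[
\omega(x',t):=t^{\,g(x')},\qquad d\mu:=\omega\,dx'\,dt ,
\]
and writing $\omega a^{\mu\nu}\partial_{\mu\nu}u=\partial_\mu(\omega a^{\mu\nu}\partial_\nu u)-\partial_\mu(\omega a^{\mu\nu})\partial_\nu u$, the equation takes the divergence form $\partial_\mu(\omega a^{\mu\nu}\partial_\nu u)+\omega\,\widehat b^{\,\mu}\partial_\mu u+\omega f=0$, where the exponent $g(x')$ is chosen precisely so that the normal component $\widehat b^{\,n}$ of the residual drift vanishes on $\{t=0\}$ (whence $\widehat b^{\,n}=O(t|\log t|)$); on half-balls centred on $\{t=0\}$ the measure $\mu$ is doubling with homogeneous dimension $Q=n+g(x_0)$.

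Next I record the boundary behaviour needed to use $u$ as a weak solution. On a dyadic slab $\{2^{-k-1}<t<2^{-k+1}\}$ the rescaling $x'=2^{-k/2}\xi$, $t=2^{-k}\tau$ transforms the equation into one that is \emph{uniformly elliptic} on the fixed region $\{\tfrac12<\tau<2,\ |\xi|<1\}$, with ellipticity and coefficient bounds independent of $k$ (the cross and lower-order terms pick up factors $O(2^{-k/2})$, the diagonal normal coefficient is $\tau c\ge c_0/2$) and right-hand side of size $O(2^{-k})$. Interior $W^{2,p}$ ($p>n$) estimates, hence $C^1$ bounds, applied on each slab, yield
\[
|\nabla_{x'}u(x',t)|\le C\,t^{-1/2},\qquad |\partial_t u(x',t)|\le C\,t^{-1}\qquad\text{for }t\text{ small},
\]
with $C$ depending only on $\|u\|_\infty$, $M$ and the structural data. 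Since $g>0$, $(t|\partial_t u|^2+|\nabla_{x'}u|^2)\,\omega\lesssim t^{\,g-1}$ is integrable near $\{t=0\}$, so $u\in H^1_{\mathrm{loc}}(B_1^+;d\mu)$ up to $\{t=0\}$; moreover, integrating by parts over $\{t>\delta\}$, the boundary contribution on $\{t=\delta\}$ is $O(\delta^{\,g})+O(\delta^{\,g+1/2})\to0$ as $\delta\to0$ (here $a^{in}=O(t)$ is used). Hence $u$ solves the displayed divergence-form equation weakly against test functions in $H^1_0(B_1^+;d\mu)$, with a principal part uniformly elliptic for the degenerate frame, a bounded first-order drift whose normal component vanishes on $\{t=0\}$, and bounded zeroth-order forcing $f$.

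It remains to run the weighted De Giorgi--Moser scheme on half-balls $B_r^+(x_0)$. The measure $\mu$ is doubling, and under $s=2\sqrt t$ (so $\sqrt t\,\partial_t=\partial_s$ and $d\mu\simeq s^{\,2g+1}\,ds\,dx'$) the pair (measure, degenerate gradient) satisfies a $(2,2)$-Poincar\'e inequality --- a one-dimensional weighted Poincar\'e in $s$, valid since $2g+1>-1$, together with the flat tangential directions --- hence a weighted Sobolev inequality with gain $\chi=Q/(Q-2)>1$ (note $Q>2$ as $n\ge2$, $g>0$). With this functional framework I derive the Caccioppoli inequality for $(u-k)_\pm$: the residual normal drift is harmless because $\widehat b^{\,n}=O(t|\log t|)$ makes the associated error potential $|\widehat b^{\,n}|^2/t=O(t|\log t|^2)$ bounded; the residual tangential drift is $O(1)+O(|\log t|)$ and is absorbed by the non-degenerate tangential gradient (the $|\log t|$-factor being square-integrable against $d\mu$, with only a logarithmic loss at small scales); and, having localized to a chart so small that $g$ oscillates by $o(1)$, the variable exponent costs only such logarithmic factors. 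De Giorgi's measure-to-pointwise lemma and the usual iteration then give local boundedness and the oscillation estimate $\mathrm{osc}_{B_{r/2}^+(x_0)}u\le\theta\,\mathrm{osc}_{B_r^+(x_0)}u+C r^{\kappa}$ with $\theta\in(0,1)$, $\kappa>0$, which iterates to $\mathrm{osc}_{B_r^+(x_0)}u\le C r^{\beta}$ for all small $r$ and some $\beta\in(0,1)$. Consequently $\lim_{x\to x_0}u(x)$ exists and $|u(x)-u(x_0)|\le C|x-x_0|^\beta$ near $x_0$. Undoing the (bi-Lipschitz) flattening, covering $\partial\Omega$ by finitely many such charts, and disposing of the range $|x-y|\gtrsim1$ by $\|u\|_\infty$, one obtains $|u(x)-u(y)|\le C|x-y|^\beta$ for all $x\in\bar\Omega$, $y\in\partial\Omega$, with $C,\beta$ depending only on the listed quantities.

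\noindent\textbf{Main obstacle.} The crux is the second step: a priori only $u\in L^\infty$ is known, so it is not automatic that $u$ is a weak solution of the degenerate weighted equation up to $\{t=0\}$ rather than merely in the interior. This is settled by the dyadic-rescaling gradient bounds combined with the arithmetic fact that $g>0$ makes $t^{\,g-1}$ locally integrable. Everything afterwards is the weighted De Giorgi--Moser machinery, the only remaining nuisances --- the first-order (Fichera) drift and the variable weight exponent --- being rendered harmless by the choice $\omega=t^{\,g(x')}$ and by working in small charts. It is also this degenerate scaling (oscillation decay measured in half-balls about boundary points, i.e.\ in the metric in which $\sqrt t\,\partial_t$ has unit length) that accounts for the phenomenon noted in the introduction: one obtains H\"older continuity \emph{at} $\partial\Omega$ but not uniformly in $\bar\Omega$.
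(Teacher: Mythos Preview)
Your outline is sound and takes a genuinely different route from the paper's proof, though both share the same skeleton: flatten the boundary, cast the equation in divergence form, justify that the bounded classical solution is a weak solution up to $\{t=0\}$, and run a De~Giorgi--Moser oscillation argument on half-cylinders. The implementations diverge in three places. \emph{Weight and drift:} the paper works in the unweighted space $\widetilde W^{1,2}$ built on the degenerate gradient $(\nabla_{x'},\sqrt{t}\,\partial_t)$ with the Lebesgue measure, and disposes of the normal drift by the sign condition $\bar b^n>0$ (equivalently $g>0$), which makes the corresponding boundary integral nonnegative; you instead multiply by $\omega=t^{g(x')}$ so as to kill $\widehat b^{\,n}$ on $\{t=0\}$, landing in a Fabes--Kenig--Serapioni type framework. \emph{Weak solution up to the boundary:} the paper (its Lemma~\ref{lem204}) multiplies the equation by $\eta_\epsilon(t)u$ and bounds each term directly, using only $u\in L^\infty$; you obtain the sharper pointwise bounds $|\partial_t u|\lesssim t^{-1}$, $|\nabla_{x'}u|\lesssim t^{-1/2}$ by a dyadic rescaling to a uniformly elliptic problem and interior $W^{2,p}$ estimates, and then use $g>0$ to integrate $t^{g-1}$. \emph{Forcing:} the paper removes $f$ by solving an auxiliary Dirichlet problem (Lemma~\ref{lem202}) for $v$ with $|v|_\infty\le CR$ and proves oscillation decay for the homogeneous $\tilde u=u-v$; you carry the bounded $f$ directly through the iteration, picking up the additive $Cr^\kappa$ term. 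What each buys: your approach is more conceptual and avoids the auxiliary existence lemma, but inherits the variable-exponent and $O(|\log t|)$ tangential-drift nuisances you flag; the paper's approach keeps a fixed, simpler functional setting (its weighted Sobolev inequality, Lemma~\ref{lem201}, is proved by a clean ``raising dimension'' trick) at the price of that extra Dirichlet lemma. One caution: your handling of the $|\log t|$ drift and of the $x'$-dependence of $g$ is asserted rather than checked; in a full proof the cleanest route is to freeze the exponent at $g_0=g(x_0)$ on a sufficiently small chart, so that the residual normal drift is $O(|x'-x_0'|)+O(t)$ and the tangential drift is bounded, which removes the logarithms entirely and makes the Caccioppoli constants scale-invariant.
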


\begin{theorem}\label{thm103}
Let $u\geq 0$ solves \eqref{105}  with $a>\frac 32,\alpha\geq 1$. Then $u\in C^2(\overline{R^n_+})$.
\end{theorem}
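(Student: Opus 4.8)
\emph{Plan of proof.} The idea is to desingularize the degeneracy at $\{y=0\}$, run a weighted elliptic bootstrap, and then transfer the gained smoothness back through the substitution. First I would set $y=t^2$ for $t>0$; writing again $u(x,t)=u(x,t^2)$, equation \eqref{105} becomes
\begin{equation*}
u_{tt}+\frac{2a-1}{t}\,u_t+4\Delta_x u+4u^\alpha=0,\qquad t>0,
\end{equation*}
which is the $t$-radial form of $\mathrm{div}_{(x,t)}\!\big(|t|^{2a-1}\nabla_{(x,t)}u\big)+4|t|^{2a-1}u^\alpha=0$. Since $u_y=\tfrac{1}{2t}u_t$ is locally bounded, $u_t$ vanishes on $\{t=0\}$, and therefore the even reflection $u(x,-t):=u(x,t)$ is a weak solution of this weighted equation on all of $\mathbb R^n$; here $u\ge 0$ and $u\in L^\infty_{\mathrm{loc}}$. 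The hypothesis $a>\tfrac32$ (i.e.\ $2a-1>2$) is what makes $|t|^{2a-1}$ the density of the "fictitious dimension" $N=2a>3$ and guarantees that all boundary terms produced by the integrations by parts below carry enough powers of $t$ to vanish on $\{t=0\}$.

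Next I would bootstrap regularity by weighted energy estimates. Since the right-hand side divided by the weight, namely $-4u^\alpha$, lies in $L^\infty_{\mathrm{loc}}$, a weighted De Giorgi--Nash--Moser iteration in the measure $|t|^{2a-1}\,dx\,dt$ (whose doubling and Sobolev--Poincar\'e properties are those of dimension $n-1+2a$) yields $u\in C^{0,\gamma}_{\mathrm{loc}}$, so in particular $u$ is continuous up to $\overline{\mathbb R^n_+}$. Then, using difference quotients in the $x$-variables (for which the weight plays no role) together with the equation to trade each pair of $t$-derivatives against $\Delta_x$ and lower-order terms, one upgrades to $u\in H^k_{\mathrm{loc}}(|t|^{2a-1})$ and, by the weighted embedding, to $u\in C^{k,\gamma}_{\mathrm{loc}}$ --- \emph{provided} the nonlinearity is smooth enough to feed each step of the iteration.

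To make $u^\alpha$ as smooth as $u$ one invokes a dichotomy. After reflection $u$ is a nonnegative supersolution of an equation that is locally uniformly elliptic away from $\{t=0\}$ and for which $\{t=0\}$ is an interior set; hence the strong minimum principle forces either $u\equiv 0$ --- in which case the conclusion is trivial --- or $u>0$ on $\overline{\mathbb R^n_+}$. In the latter case $u$ is bounded below on compacta, so $s\mapsto s^\alpha$ is a $C^\infty$ function of $u$, the bootstrap of the previous paragraph closes and runs indefinitely, and $u\in C^\infty_{\mathrm{loc}}(\mathbb R^n)$ and even in $t$. An even $C^\infty$ function of $t$ (with $x$ as parameter) is a $C^\infty$ function of $t^2$ --- Whitney's/Glaeser's theorem --- so there is $\tilde u\in C^\infty$ with $u(x,t)=\tilde u(x,t^2)$; that is, $\tilde u(x,y)$ is the original solution and $\tilde u\in C^\infty(\overline{\mathbb R^n_+})\subset C^2(\overline{\mathbb R^n_+})$.

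The main obstacle is the second step: pushing the energy estimates through the degenerate set $\{t=0\}$ while controlling enough $t$-derivatives. The difficulty is twofold. First, $|t|^{2a-1}$ is \emph{not} an $A_2$ weight when $a>1$, so one cannot quote the standard weighted Calder\'on--Zygmund and De Giorgi machinery verbatim and must instead exploit the "dimension $2a$" structure by hand (or, when $2a\in\mathbb N$, lift to a ball in $\mathbb R^{n-1+2a}$ where the reflected function is axially symmetric and solves an honest semilinear Laplace equation, and approximate in the non-integer case). Second, one genuinely needs more than $C^{2,\gamma}$ of the reflected solution in order to recover $C^2$ of $u$ in the variable $y$: for instance $|t|^{2+\gamma}$ is $C^{2,\gamma}$ in $t$ but equals $y^{1+\gamma/2}$, which is only $C^{1,\gamma/2}$ in $y$ near the axis. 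This is exactly why the positivity dichotomy --- and hence the smoothness of the nonlinearity, which lets the bootstrap reach arbitrarily high order --- cannot be dispensed with.
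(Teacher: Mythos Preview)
Your substitution--reflection--bootstrap strategy is conceptually reasonable and genuinely different from the paper's route (which works directly with an explicit Bessel-kernel parametrix $K$ for the operator $y\partial_{yy}+a\partial_y+\Delta_x$ and the $L^p$/Schauder machinery of \cite{HH}, bootstrapping $u_r=K(f)$ through the scale of spaces governed by $I_p$ and $I_\beta$ without ever invoking a positivity dichotomy). However, there is a circular step at the very start: you assert that ``$u_y=\tfrac{1}{2t}u_t$ is locally bounded'' in order to conclude $u_t|_{t=0}=0$ and justify that the even reflection is a weak solution. The only hypothesis is $u\in L^\infty_{\mathrm{loc}}(\overline{\mathbb R^n_+})$; local boundedness of $u_y$ up to $\{y=0\}$ is a large part of what the theorem is asserting, and nothing you have written before that point implies it. What one can actually extract cheaply from the equation and $u\in L^\infty_{\mathrm{loc}}$ is the weighted energy bound $\int y|u_y|^2+|\nabla_x u|^2<\infty$ locally (this is the content of Lemma~\ref{lem204} in the paper and the first line of the proof of Lemma~\ref{lem301}); that is enough to kill the boundary flux $|t|^{2a-1}u_t$ along a sequence $t=\epsilon_k\to 0$ and hence to make the reflected function a weak solution, but you have to \emph{prove} that estimate rather than assume the much stronger $u_y\in L^\infty_{\mathrm{loc}}$.

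Beyond that, you correctly flag---but do not close---the second gap: the weight $|t|^{2a-1}$ with $2a-1>2$ is not $A_2$, so neither De~Giorgi--Moser for H\"older continuity, nor higher-order weighted energy estimates, nor the strong minimum principle across $\{t=0\}$ are available off the shelf; the suggestion to lift to $\mathbb R^{n-1+2a}$ when $2a\in\mathbb N$ and ``approximate in the non-integer case'' is a program, not a proof. The paper sidesteps all of this by using the tailor-made estimates $I_p(K(f))\le C\|f\|_{L^p}$ and the Schauder-type Lemma~\ref{lem603}, which encode exactly the right weighted calculus for this operator. Note also that the paper's bootstrap does not need your dichotomy $u\equiv 0$ versus $u>0$: since $u\ge 0$ and $\alpha\ge 1$, the map $u\mapsto u^\alpha$ already preserves enough H\"older regularity to reach $C^2$ directly, whereas your route requires $C^\infty$ in $t$ (to invoke Whitney/Glaeser) and hence genuinely needs strict positivity.
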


\par
The present paper organizes as the following. In Section 2, by the De Giorgi-Moser method we get the H\"older regularity on the boundary of \eqref{104}. In Section 3, the utility of energy integral helps us improve the regularity. In Section 4, we prove Theorem \ref{thm101} via the  blow up method.

\section{$C^\alpha$ regularity on the boundary}
\setcounter{section}{2} \setcounter{equation}{0}
\setcounter{theorem}{0}\setcounter{lemma}{0}

This section is devoted to the proof of Theorem \ref{thm102}. The proof is based on the De Giorgi-Moser method. We first derive a weighted-Sobolev inequality for later use. Denote by
$\widetilde W^{1,2}(G)$ where $G\subset R^n_+=\{(x_1,...x_n)\in R^n|x_n>0\}$
the completion of all the functions $u \in C^1(\bar G)$ under the
following norm
$$\Big(\int_G (x_n (\partial_n u)^2+|\nabla_{x'}u|^2+u^2)dx\Big)^{\frac 12}.$$
\begin{lemma}\label{lem201}
(1)For all $u\in \widetilde W^{1,2}(G)$ with $u=0$ on $\partial G\cap
R^n_+$ there is a universal constant  C independent of $G $ such
that
\begin{equation}\label{weight1}
\left(\int_{G}|u|^{\frac{2(n+1)}{n-1}}dx\right)^\frac{n-1}{n+1}\le C
\int_{G}(x_n (\partial_n u)^2+|\nabla_{x'}u|^2)dx.
\end{equation} (2)With
$G_1=\{0<x_n<1,|x'|^2<1\}$ for any $\epsilon >0$ there exists a
constant $C_\epsilon$ such that
\begin{equation}\label{weight2}
\int_{G_1}u^2dx\le C_\epsilon \int_{G_1}(x_n(\partial_n u)^2+|\nabla_{x'}u|^2)dx
\end{equation} for all $u\in C^1(\bar G_1)$ subject to $|\{x\in G_1|
u(x)=0\}|\ge \epsilon$.
\end{lemma}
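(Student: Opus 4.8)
The plan is to handle the two assertions separately. For (1) the key observation is that the degenerate weight $x_n$ is exactly the Jacobian produced by "lifting" the problem to two additional Euclidean dimensions, after which the classical Sobolev inequality applies; for (2) I will argue by contradiction, combining (1) with a uniform control of the $L^2$-mass near the degenerate face $\{x_n=0\}$ and a Rellich-type compactness away from it.

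\emph{Part (1).} By definition $\widetilde W^{1,2}(G)$ is the completion of $C^1(\bar G)$ in the stated norm and both sides of \eqref{weight1} are continuous in that norm, so it suffices to treat $u\in C^1(\bar G)$ vanishing in a neighbourhood of $\partial G\cap R^n_+$. Introduce $s=2\sqrt{x_n}$, i.e. $x_n=s^2/4$, and set $\hat u(x',s)=u(x',s^2/4)$; then $\partial_s\hat u=\tfrac s2\,\partial_n u$, whence $x_n(\partial_n u)^2=(\partial_s\hat u)^2$ and $dx=\tfrac s2\,ds\,dx'$. Now regard $s$ as the radial variable of an auxiliary $R^2$: for $y\in R^2$ put $v(x',y)=\hat u(x',|y|)$ on $\widetilde G=\{(x',y):(x',|y|^2/4)\in G\}\subset R^{n-1}_{x'}\times R^2_y=R^{n+1}$. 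Passing to polar coordinates in $y$ and using $|\nabla_y v|^2=(\partial_s\hat u)^2$,
\begin{equation}\label{pp1}
\int_{\widetilde G}|v|^q\,dy\,dx'=4\pi\int_G|u|^q\,dx,\qquad \int_{\widetilde G}\bigl(|\nabla_y v|^2+|\nabla_{x'}v|^2\bigr)\,dy\,dx'=4\pi\int_G\bigl(x_n(\partial_n u)^2+|\nabla_{x'}u|^2\bigr)\,dx,
\end{equation}
where $q:=\tfrac{2(n+1)}{n-1}$. Since $u$ vanishes near $\partial G\cap R^n_+$, the only part of $\partial\widetilde G$ on which $v$ may fail to vanish is the axis $\{y=0\}$, which has codimension two; moreover $v$ is Lipschitz there because $|\nabla_y v|=\tfrac{|y|}{2}|\partial_n u|$, so the zero extension of $v$ lies in $W^{1,2}(R^{n+1})$. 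Applying the Gagliardo--Nirenberg--Sobolev inequality in $R^{n+1}$, $\|v\|_{L^{q}(R^{n+1})}\le C(n)\|\nabla v\|_{L^2(R^{n+1})}$ (the Sobolev exponent $\tfrac{2N}{N-2}$ with $N=n+1$ equals $q$), and substituting \eqref{pp1}, the factors $4\pi$ combine into a constant depending only on $n$, which gives \eqref{weight1}.

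\emph{Part (2), reduction and compactness.} Suppose \eqref{weight2} fails for some $\eps>0$: there are $u_k\in C^1(\bar G_1)$ with $\int_{G_1}u_k^2=1$, $E_k:=\int_{G_1}\bigl(x_n(\partial_n u_k)^2+|\nabla_{x'}u_k|^2\bigr)\to0$, and $|\{x\in G_1:u_k(x)=0\}|\ge\eps$. On $\{x_n>\rho\}\cap G_1$ the operator is uniformly elliptic, so $\|u_k\|_{W^{1,2}(\{x_n>\rho\}\cap G_1)}^2\le 1+E_k/\rho$ is bounded and, by Rellich, $\{u_k\}$ is precompact in $L^2(\{x_n>\rho\}\cap G_1)$ for each $\rho\in(0,1)$. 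Near the degenerate face, for $0<x_n<\tfrac12$ write $u_k(x',x_n)=u_k(x',\tfrac12)-\int_{x_n}^{1/2}\partial_t u_k(x',t)\,dt$ and apply Cauchy--Schwarz with weight $t$:
\begin{equation}\label{pp2}
u_k(x',x_n)^2\le 2u_k(x',\tfrac12)^2+2\ln\tfrac{1}{2x_n}\int_0^1 t\,(\partial_n u_k)^2(x',t)\,dt .
\end{equation}
Integrating \eqref{pp2} over $\{x_n<\rho,\ |x'|<1\}$ (for $\rho<\tfrac12$), using $\int_{G_1}x_n(\partial_n u_k)^2\le E_k$, the trace bound $\int_{|x'|<1}u_k(x',\tfrac12)^2\,dx'\le C(1+E_k)$ on the slab $\{\tfrac14<x_n<\tfrac34\}$, and $\int_0^\rho\ln\tfrac1{2x_n}\,dx_n=\rho\ln\tfrac1{2\rho}+\rho$, one gets $\int_{\{x_n<\rho\}\cap G_1}u_k^2\le C\bigl(\rho+\rho\ln\tfrac1\rho\bigr)=:\omega(\rho)$, uniformly in $k$, with $\omega(\rho)\to0$ as $\rho\to0$. (Alternatively, apply part (1) to $\eta u_k$ with $\eta$ a cutoff equal to $1$ near the bottom face of $G_1$ and vanishing near its top and lateral faces, then use H\"older's inequality.) Together with the Rellich step this shows $\{u_k\}$ is precompact in $L^2(G_1)$.

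\emph{Part (2), conclusion.} Along a subsequence $u_k\to u$ in $L^2(G_1)$ with $\|u\|_{L^2(G_1)}=1$. Since $\sqrt{x_n}\,\partial_n u_k\to0$ and $\nabla_{x'}u_k\to0$ in $L^2(G_1)$ while $u_k\to u$ in $\mathcal D'(G_1)$, the distributional derivatives satisfy $\nabla_{x'}u\equiv0$ and $\sqrt{x_n}\,\partial_n u\equiv0$, so $\partial_n u=0$ a.e. (as $x_n>0$ a.e.); since $G_1$ is connected, $u\equiv c$ with $c^2|G_1|=1$, in particular $c\neq0$. But then
\begin{equation}\label{pp3}
c^2\eps\le c^2\,|\{u_k=0\}|=\int_{\{u_k=0\}}|u|^2\,dx=\int_{\{u_k=0\}}|u-u_k|^2\,dx\le\|u-u_k\|_{L^2(G_1)}^2\longrightarrow0,
\end{equation}
a contradiction, proving \eqref{weight2}. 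The main obstacle in (1) is recognizing the change of variables that turns the weight into a dimension-raising device; in (2) the delicate point is the \emph{uniform}-in-$k$ smallness of $\int_{\{x_n<\rho\}}u_k^2$ as $\rho\to0$ over the whole cross-section $|x'|<1$ (not merely a smaller cylinder), which is precisely where ellipticity degenerates and where the weighted one-dimensional estimate \eqref{pp2} together with the trace bound at $x_n=\tfrac12$ is essential; the remaining ingredients are standard.
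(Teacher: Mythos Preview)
Your proof is correct. For part~(1) you use essentially the same dimension-raising device as the paper: both start with the substitution $s=2\sqrt{x_n}$ and then add one further variable to absorb the remaining Jacobian factor $s$. The paper realises this by introducing a triangular strip $\{0<z<s\}$ followed by reflections across $\{s=0\}$ and $\{z=0\}$, whereas you treat $s$ as the radial coordinate in an auxiliary $R^2$ and rotate; the two constructions are equivalent and both reduce the claim to the standard Sobolev inequality in $R^{n+1}$.

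For part~(2) your approach genuinely differs. The paper simply reuses the lifting from part~(1): the hypothesis $|\{u=0\}|\ge\epsilon$ transfers to the lifted function on $\widetilde{T(G_1)}\subset R^{n+1}$ with a comparable lower bound on the measure of its zero set, after which the classical Poincar\'e inequality for functions vanishing on a set of positive measure applies directly in the nondegenerate (lifted) setting. This is much shorter and sidesteps any analysis near the degenerate face. Your contradiction/compactness argument is also correct but more laborious: it requires a uniform-in-$k$ smallness estimate for $\int_{\{x_n<\rho\}}u_k^2$ (via the one-dimensional weighted inequality and a trace bound at an interior level) together with Rellich compactness on each slab $\{x_n>\rho\}$, none of which the paper's route needs. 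The trade-off is that your argument does not rely on any explicit change of variables and would adapt to weights for which no such convenient lifting is available.
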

\begin{proof} The proof for  the present lemma is based on the
raising dimension argument. Define a transformation
$$ T_:\quad \quad G \ni (x',x_n)\longrightarrow (x',2\sqrt{x_n})=(x',y)\in T(G)$$
Let $G\subset R^n_+$ and $u\in \widetilde W^{1,2}(G)$.  Lift  $T(G)$
in $R^{n+1}$ by such a way $\widetilde{T(G)} =\{(x',y,z)\in
R^{n+1}|(x',y)\in T(G), 0<z<y\}$.  Then
\begin{equation}\label{weight3} \int_G |u|^pdx=\frac 12\int_{T(G)}|u\circ
T^{-1}|^p y dx'dy=\frac 12 ||u\circ
T^{-1}||^p_{L^p(\widetilde{T((G))})}\end{equation} and
\begin{equation}\label{weight4}
\int_G(x_n(\partial_n u)^2+|\nabla_{x'}u|^2)dx=\frac 12\int_{T(G)}|\nabla_{x',y}(u\circ T^{-1})|^2 y
dx'dy=\frac 12\|\widetilde \nabla( u\circ
T^{-1})\|^2_{L^2(\widetilde{T(G)})} \end{equation} where $\widetilde
\nabla=(\nabla_{x'},\partial_y,\partial_z)$ is the
gradient in $R^{n+1}$. Now let us deal with the first inequality of Lemma \ref{lem201}. It suffices to prove (\ref{weight1}) is valid for all
$u\in C^1(\bar G)$. Let $u\in C^1(\bar G)$ with $u=0$ at $\partial
G\cap R^n_+$. Set $\tilde u=u$ as $(x',x_n)\in G$ and $\tilde u=0$ as
$(x',x_n)\in R^n_+\backslash G$,
$$P(u\circ T^{-1})(x',y,z)=\begin{cases}
\tilde u\circ T^{-1}(x',y,z)&\textrm { in }\widetilde{T(G)} \\
\tilde u\circ T^{-1}(x',z,y)&\textrm { as }z>y>0.\\
\end{cases}$$Then
we can extend $P(u\circ T^{-1})$ to $R^{n+1}$ by symmetric extension
first with respect to the plan $y=0$ and then to the plan
$z=0$. Obviously, $P(u\circ T^{-1})\in
H^1(R^{n+1})\hookrightarrow L^q(R^{n+1})$ where $q=\frac{2(n+1)}{n-1}$. Therefore by
(\ref{weight3}), (\ref{weight4}) and the standard Sobolev embedding
theorem it turns out
\begin{eqnarray*}
\left(\int_G|u|^{\frac{2(n+1)}{n-1}}dx\right)^{\frac{n-1}{n+1}}&=&\frac 18\left(\int_{R^{n+1}}|P(u\circ
T^{-1})|^{\frac{2(n+1)}{n-1}} dx'dydz\right)^{\frac{n-1}{n+1}}\\
&\le& C \|\widetilde \nabla(P(u\circ T^{-1}))\|^2_{L^2(R^{n+1})}\\
&=&8C\int_{\widetilde{T(G)}}|\widetilde \nabla(P(u\circ
T^{-1}))|^2dx'dydz\\
&=&16C\int_G(x_n(\partial_n u)^2+|\nabla_{x'}u|^2)dx
\end{eqnarray*}for some universal constant $C$ independent of $u$.
\par Now let us deal with (\ref{weight2}) which corresponds to the Poincare inequality. Suppose that $u\in C^1(\bar G_1)$ where $G_1=\{0<x_n<1,|x'|^2<1\}$, subject to
$|\{(x',x_n)\in G_1| u(x',x_n)=0\}|\ge \epsilon>0$. Then
$|\{(x',y,z)\in \widetilde{T(G_1)}| (u\circ
T^{-1})(x',y,z)=0\}|\ge \epsilon/C$ for some unversal
constant $C$. Hence using the well-known Poincare inequality we
can get
\begin{eqnarray*}
\int_{G_1}|u|^2dx &=&\frac 12\int_{\widetilde{T(G_1)}}|u\circ
T^{-1}|^2dx'dydz\\
&\le& C_\epsilon \int_{\widetilde{T(G_1)}}|\widetilde \nabla(u\circ T^{-1})|^2dx'dydz\\
&=&2C_\epsilon\int_{G_1}(x_n (\partial_n u)^2+|\nabla_{x'}u|^2)dx
\end{eqnarray*}for some constant $C_\epsilon$ depending only on
$\epsilon$. This completes the proof for the present lemma.
\end{proof}

Set $G_R(x_0)=\{|x_i-(x_0)_i|<\sqrt R,i=1,...,n-1,0<x_n<R\}$ for $x_0=(x_0',0)$. Sometimes, if no confusion occurs, simply write $G_R$.

Consider
\begin{equation}\label{201}
\begin{cases}
\mathscr{L}(v)=\partial_i(a^{ij}\partial_j v)+b^j \partial_jv=f\in L^\infty\text{ in } G_R\\
v=0\    \text{ on }\partial G_R\cap \{x_n>0\}\\
\end{cases}
\end{equation}
The coefficients satisfy that\begin{equation}\label{202}
(a^{ij})_{n\times n}\geq 0,(a^{ij})_{n-1\times n-1}\geq \lambda I\text{ for some constant }\lambda>0,a^{in}=x_nb^{in}, \text{ with }  b^{nn}>0.
\end{equation}Moreover, we assume \begin{equation}a^{ij}\in C^2(\overline{R^n_+}), b^{in},b^{i}\in C^1(\overline{R^n_+}), b^n\geq 0\label{1999}\end{equation}

\begin{lemma}\label{lem202} If the assumptions in \eqref{202} and \eqref{1999} are satisfied. Then \eqref{201} admits a solution $v\in H^1(G_R)\cap W^{2,l}_{loc}(G_R),\forall l\in[2,\infty)$, moreover, there exists $R_0$, such that for $\forall R\in (0,R_0)$, it is valid that $$|v|_{\infty}\leq C|f|_\infty R,\text{ for some universal constant C}.$$
\end{lemma}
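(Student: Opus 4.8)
The plan is to separate the statement into (i) existence of a solution in the natural energy space together with interior elliptic regularity, and (ii) the scale‑invariant bound $|v|_{\infty}\le C|f|_{\infty}R$, which is the real content. For (i) I would solve \eqref{201} variationally in the weighted space $\widetilde W^{1,2}(G_R)$ of Lemma~\ref{lem201}, on the closed subspace obtained by completing the $C^1$ functions that vanish on $\partial G_R\cap\{x_n>0\}$. The bilinear form $B(v,\varphi)=\int_{G_R}\!\big(a^{ij}\partial_jv\,\partial_i\varphi-b^j\partial_jv\,\varphi\big)$ attached to $\mathscr{L}$ is continuous and, once $R\le R_0$ is small, coercive: by \eqref{202}, after a Cauchy–Schwarz splitting of the cross terms $2a^{in}\partial_iv\,\partial_nv=2x_nb^{in}\partial_iv\,\partial_nv$, one has $a^{ij}\partial_iv\,\partial_jv\ge\tfrac\lambda2|\nabla_{x'}v|^2+x_n\bigl(b^{nn}-\tfrac{2x_n}{\lambda}\sum_{i<n}(b^{in})^2\bigr)|\partial_nv|^2\ge c_5\bigl(|\nabla_{x'}v|^2+x_n|\partial_nv|^2\bigr)$ for $x_n\le R_0$, since $b^{nn}\ge c_1>0$ on $\overline{G_{R_0}}$ by \eqref{202}, \eqref{1999}; the term $\int b^j\partial_jv\,v$ is controlled after integrating the $j=n$ piece by parts (here $b^n\ge0$ makes the resulting $\{x_n=0\}$‑boundary term have the good sign), and $\int v^2$ is absorbed using \eqref{weight1} and Hölder since $|G_R|^{2/(n+1)}=c_nR$. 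Lax–Milgram then gives $v\in\widetilde W^{1,2}(G_R)$; away from $\{x_n=0\}$ the operator is uniformly elliptic with leading coefficients in $C^2$ by \eqref{1999}, so the classical $L^l$ theory yields $v\in W^{2,l}_{\mathrm{loc}}(G_R)$ for every $l<\infty$.

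For (ii) I would use the explicit affine barrier $w:=M|f|_{\infty}(R-x_n)$ with $M:=2/c_1$. Since $w$ is affine, $\mathscr{L}w=-M|f|_{\infty}\bigl(\textstyle\sum_i\partial_ia^{in}+b^n\bigr)=-M|f|_{\infty}\bigl(b^{nn}+x_n\sum_i\partial_ib^{in}+b^n\bigr)$, and $b^{nn}+b^n\ge c_1$ on $\overline{G_{R_0}}$ together with $|x_n\sum_i\partial_ib^{in}|\le C_3R_0$ give $\mathscr{L}w\le -|f|_{\infty}$ in $G_R$ as soon as $R_0\le c_1/(2C_3)$; moreover $w\ge0$ on $\overline{G_R}$, in particular on $\partial G_R\cap\{x_n>0\}$. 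Hence $\mathscr{L}(v-w)=f-\mathscr{L}w\ge f+|f|_{\infty}\ge0$ and likewise $\mathscr{L}(-v-w)\ge0$, weakly.

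The delicate step — and the main obstacle — is to pass from these differential inequalities to $|v|\le w$ \emph{without} any boundary information on the characteristic face $\{x_n=0\}$, where \eqref{201} prescribes nothing. I would argue energetically rather than pointwise: test the weak inequality $\mathscr{L}(v-w)\ge0$ against $(v-w)^+\in\widetilde W^{1,2}(G_R)$, which vanishes on $\partial G_R\cap\{x_n>0\}$. After a routine cut‑off near $\{x_n=0\}$ (justified because $\int_{G_R}x_n|\partial_nv|^2<\infty$), the divergence‑form part contributes no boundary integral on $\{x_n=0\}$ since $a^{nj}=x_nb^{nj}$ vanishes there, while integrating the $j=n$ first‑order term by parts again leaves only the favorable sign coming from $b^n\ge0$. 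What remains is $c_5\int_{G_R}\bigl(|\nabla_{x'}(v-w)^+|^2+x_n|\partial_n(v-w)^+|^2\bigr)\le C_4\,\|(v-w)^+\|_{L^2(G_R)}^2$, and feeding this into \eqref{weight1}, Hölder, and $|G_R|^{2/(n+1)}=c_nR$ produces $\|(v-w)^+\|_{L^{2(n+1)/(n-1)}}^2\le C\,c_nR\,\|(v-w)^+\|_{L^{2(n+1)/(n-1)}}^2$, which forces $(v-w)^+\equiv0$ once $R<R_0$. Applying the same to $-v-w$ gives $|v|\le w\le M|f|_{\infty}R$, i.e. \eqref{201}'s solution satisfies the bound with $C=2/c_1$ depending only on the structural constants.

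Finally, the sharpening $v\in H^1(G_R)$, rather than only $v\in\widetilde W^{1,2}(G_R)$, rests again on the strict positivity of the normal first‑order coefficient $b^{nn}+b^n$ at $\{x_n=0\}$: already the one–dimensional model $\partial_{x_n}\!\big(x_nb^{nn}\partial_{x_n}v\big)+b^n\partial_{x_n}v=(\text{bounded})$ has $\partial_{x_n}v$ bounded near $x_n=0$, and in the PDE this is made rigorous either by a weighted test‑function estimate near $\{x_n=0\}$ or by the raising‑dimension device used in the proof of Lemma~\ref{lem201}, which identifies the weighted energy of $v$ with the ordinary $H^1$ energy of an associated function on a higher‑dimensional half‑space. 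All of the smallness requirements on $R$ above involve only $\lambda$, $c_1$, the $C^1$‑norms of $b^{in},b^i$, the $C^2$‑norm of $a^{ij}$, and $n$, so $R_0$ and $C$ are universal in the required sense.
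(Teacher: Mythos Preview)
Your $L^\infty$ argument via the affine barrier $w=M|f|_\infty(R-x_n)$ is correct and genuinely different from the paper's. The paper obtains the bound by a De~Giorgi--Stampacchia iteration: it tests the equation with $(v-k)^+$ for constant levels $k$, combines this with the weighted Sobolev inequality of Lemma~\ref{lem201} to produce a superlevel-set decay $|A(h)|\le C|f|_\infty^{\frac{2(n+1)}{n-1}}(h-k)^{-\frac{2(n+1)}{n-1}}|A(k)|^{\frac{n+3}{n-1}}$, iterates, and finally absorbs the threshold $k_0$ via $\|v^+\|_{L^2}\le CR^{(n+1)/4}\sup v^+$. Your route short-circuits all of this: once $\mathscr L(v-w)\ge 0$ and $(v-w)^+$ vanishes on $\partial G_R\cap\{x_n>0\}$, a single energy test plus the embedding yields $\|(v-w)^+\|_{L^{2(n+1)/(n-1)}}^2\le CR\,\|(v-w)^+\|_{L^{2(n+1)/(n-1)}}^2$ directly. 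This is more elementary and even gives an explicit constant $C=2/c_1$; the paper's level-set method, on the other hand, does not rely on building a supersolution and would survive under weaker structural assumptions on $b^n$ and $b^{nn}$.

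There is, however, a real gap in your existence step. The bilinear form $B(v,\varphi)=\int_{G_R}(a^{ij}\partial_jv\,\partial_i\varphi-b^j\partial_jv\,\varphi)$ is \emph{not} continuous on $\widetilde W^{1,2}(G_R)\times\widetilde W^{1,2}(G_R)$: controlling $\int b^n\partial_nv\,\varphi$ would require a Hardy inequality $\int_{G_R} x_n^{-1}\varphi^2\le C\|\varphi\|_{\widetilde W^{1,2}}^2$, and this fails (take $\varphi$ equal to $1$ near $x_n=0$ and cut off before the lateral boundary). So Lax--Milgram cannot be invoked as written. The paper does not attempt a variational construction at all; it simply quotes \cite{H} for the existence of $v\in H^1(G_R)$, and with $\partial_nv\in L^2$ in hand both the paper's iteration and your barrier test are justified---this is precisely how the paper's ``Claim'' disposes of the boundary integrals on $\{x_n=\epsilon\}$. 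Your closing paragraph on upgrading $\widetilde W^{1,2}$ to $H^1$ does not fill the gap either: the raising-dimension device of Lemma~\ref{lem201} identifies $\widetilde W^{1,2}(G_R)$ with an $H^1$ space in one \emph{higher} dimension, not with $H^1(G_R)$ itself, and the one-dimensional ODE heuristic is not a proof for the PDE.
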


\begin{proof}
First we shall compute the  sign-invariable $D$ which is Fichera number like refer to \cite{H} on $x_n=0$ where $\phi(x)=x_n$: $$D=b^i\partial_i\phi+\frac 12\partial_k a^{ij}\partial_i\phi\partial_j\phi\partial_k\phi|\nabla\phi|^{-2}=b^n+\frac 12 b^{nn}>0$$
From \cite{H}, one can get the existence of a solution $v$ such that $v\in H^1(G_R)$. And by the non-degeneracy in the interior of $G_R$, $v\in W^{2,l}_{loc}(G_R)$ follows immediately.
Set $\bar v=(v-k)^+$. Multiplying both sides of \eqref{201} by $-\bar v$ and integrating them on $G_R\cap\{x_n\geq \epsilon\}$, one can see that  \begin{eqnarray}\label{203}
&&-\int_{G_R\cap\{x_n\geq \epsilon\}} f\bar v\nonumber\\&=&\int_{G_R\cap \{x_{n}=\epsilon
\}}a^{jn}\bar v\partial_j \bar v+\int_{G_R\cap \{x_n\geq \epsilon\}}a^{ij}\partial_i\bar v\partial_j \bar v-\sum_1^{n-1}\int_{G_R\cap \{x_n\geq \epsilon\}}b^j\partial_j\bar v\bar v\nonumber\\&+&\frac 12\int_{G_R\cap \{x_{n}=\epsilon
\}} b^n\partial_n\bar v^2+\frac 12\int_{G_R\cap \{x_n\geq \epsilon\}}\partial_n b^n \bar v^2\nonumber\\&\ge & c_1\int_{G_R\cap \{x_n\geq \epsilon\}}(x_{n}|\partial_{n}\bar v|^2+|\nabla_{x'}\bar v|^2)-c_2\int_{G_R\cap \{x_n\geq \epsilon\}} \bar v^2+\int_{G_R\cap \{x_{n}=\epsilon
\}}a^{jn}\bar v\partial_j \bar v
\end{eqnarray}
In getting the last inequality, we have used $b^n\geq 0$ and \begin{equation}
a^{ij}\xi_i\xi_j\geq c_0(x_n\xi_n^2+|\xi'|^2),\text{ if } R \text{ is small enough}
\end{equation}

Claim: we can choose suitable $\epsilon_k\rightarrow 0$ such that$$\int_{G_R\cap \{x_{n}=\epsilon_k\}}a^{jn}\bar v\partial_j \bar v\rightarrow 0.$$
As $\bar v\in L^2(G_R)$, without loss of generality we may assume that \begin{equation}
\int_{\bar G_R\cap\{x_{n}=0\}}\bar v^2\leq c
\end{equation}Then \begin{eqnarray}\label{204}
&&\bar v(x',x_{n})=\int_0^{x_n} \partial_{n}\bar v(x',s) ds+\bar v(x',0)\nonumber\\\Rightarrow &&\|\bar v(x',x_n)\|_{L^2(G_R\cap\{x_{n}=t\})}\leq c(\|\partial_{n}\bar v\|_{L^2(G_R)}+\|\bar v(x',0)\|_{L^2(\bar{G}_R\cap\{x_{n}=0\})})
\end{eqnarray}By now, we have proved that $\bar v$ is uniformly integrable on each $x_n-$section of $G_R$. If $\forall \epsilon>0$, $\exists c_0>0$ such that\begin{eqnarray}
&&\int_{G_R\cap \{x_{n}=\epsilon\}}\epsilon|\partial_j \bar v|^2\geq c_0\nonumber\\
\Rightarrow &&\int_{G_R}|\partial_j \bar v|^2\geq \int_0^R\frac{c_0}{x_{n}}dx_{n}=\infty
\end{eqnarray}which is a contradiction. This means $\exists \epsilon_k\rightarrow 0$ such that \begin{equation}\label{205}
\int_{G_R\cap \{x_{n}=\epsilon_k\}}\epsilon_k|\partial_j \bar v|^2\rightarrow 0
\end{equation}Combining \eqref{204} with \eqref{205}, one can get\begin{equation}
\left|\int_{G_R\cap \{x_{n}=\epsilon_k\}}a^{jn}\bar v\partial_j \bar v\right|\leq c\epsilon_k^\frac 12\left(\int_{G_R\cap \{x_{n}=\epsilon_k\}}\epsilon_k|\partial_j \bar v|^2dx'\right)^\frac 12\left(\int_{G_R\cap \{x_{n}=\epsilon_k\}}|\bar v|^2dx'\right)^\frac 12\rightarrow 0
\end{equation}Thus, the claim is proved. This allows us to take $\epsilon=0$ such that \eqref{203} remains true with the last term vanished. It follows that \begin{equation}\label{fff}\int_{G_R}x_n|\partial_n \bar v|^2+|\nabla_{x'}\bar v|^2\leq c\int_{G_R}|f|\bar v+|\bar v|^2.\end{equation}

By Lemma \ref{lem201} and inequality \eqref{fff}, we see \begin{eqnarray*}
\left(\int_{G_R}\bar v^{\frac{2(n+1)}{n-1}}\right)^{\frac{n-1}{n+1}}&\leq& c\int_{G_R}(x_{n}|\partial_{n}\bar v|^2+|\nabla_{x'}\bar v|^2)\nonumber\\
&\leq &c\left(\int \bar v^{\frac{2(n+1)}{n-1}}\right)^{\frac{n-1}{n+1}}|A(k)|^{\frac 2{n+1}}+|f|_\infty\left(\int \bar v^{\frac{2(n+1)}{n-1}}\right)^{\frac{n-1}{2(n+1)}}|A(k)|^{\frac{n+3}{2(n+1)}}\nonumber\\&\leq &
c\left(\int \bar v^{\frac{2(n+1)}{n-1}}\right)^{\frac {n-1}{n+1}}|A(k)|^{\frac 2{n+1}}+\frac 12\left(\int \bar v^{\frac{2(n+1)}{n-1}}\right)^{\frac {n-1}{n+1}}+c|f|^2_{\infty}|A(k)|^{\frac{n+3}{n+1}}
\end{eqnarray*}where $A_k=\{x\in G_R|v\geq k\}$. This implies that \begin{equation}
(h-k)^2|A(h)|^{\frac {n-1}{n+1}}\leq c|f|^2_{\infty}|A(k)|^{\frac{n+3}{n+1}}
\end{equation}\begin{equation}
|A(h)|\leq \frac{c^{\frac{n+1}{n-1}}|f|^{\frac{2(n+1)}{n-1}}}{(h-k)^{\frac{2(n+1)}{n-1}}}
|A(k)|^{\frac{n+3}{n-1}}
\end{equation}if we take $k\geq k_0$ such that $c|A(k_0)|^{\frac 2{n+1}}\leq \frac 14$.
By Proposition 5.1 in [\cite{G},P79], it follows that \begin{equation}
|A(k_0+d)|=0,\  d=\sqrt c|f|_\infty|A(k_0)|^{\frac{2}{n+1}}
\end{equation} This results that \begin{equation}
\underset{G_R}{\sup} \  v^+ \leq k_0+d\leq k_0+C|f|_\infty R
\end{equation}
It remains to estimate $k_0$. From $k^2|A(k)|\leq \|v^+\|_{L^2(G_R)}^2$, one can deduce that \begin{equation}
c|A(k)|^{\frac 2{n+1}}\leq c\left(\frac{\|v^+\|_{L^2(G_R)}^2}{k^2}\right)^\frac{2}{n+1}\leq \frac 14\text{ if }k\geq k_0=(4c)^{\frac{n+1}4}\|v^+\|_{L^2(G_R)}
\end{equation}
Hence,\begin{eqnarray*}
\underset{G_R}{\sup}\   v^+&\leq& C\|v^+\|_{L^2(G_R)}+c|f|_{L^\infty}R\\ &\leq &cR^{\frac{n+1}4}\underset{G_R}{\sup}\  v^+ +c|f|_{L^\infty}R
\end{eqnarray*}If we choose $R$ small enough such that $cR^{\frac{n+1}4}\leq \frac 12$, the present lemma follows immediately from the last inequality.
\end{proof}
\begin{lemma}\label{lem203} Let the assumptions in Lemma \ref{lem202} be fulfilled and let $v\in H^1(G_1)\cap W_{loc}^{2,l}(G_1),\forall l\in[2,\infty)$. Moreover,
$v$ satisfies $\mathscr{L}(v)\ge 0$ in $G_1$. Then \begin{equation}
\underset{G_{\theta R}}{\sup}\  |v|\le C_\theta(\frac 1{|G_R|}\int_{G_R} v^2)^{\frac 12}
\end{equation}
\end{lemma}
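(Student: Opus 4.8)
The plan is a Moser iteration set in the weighted energy that Lemma~\ref{lem201} is designed for. It suffices to bound $\sup_{G_{\theta R}}w$ for $w:=v^{+}$: since $\mathscr L(v)\ge 0$ makes $v$ a subsolution, this one-sided estimate is what the energy method delivers, and the two-sided bound for $|v|$ follows by the same argument applied to $-v$ (or by reading $v$ as $v^{+}$ throughout). We may assume $R$ is small enough that $a^{ij}\xi_i\xi_j\ge c_0(x_n\xi_n^2+|\xi'|^2)$ on $G_R$, exactly as in the proof of Lemma~\ref{lem202}; the interior $W^{2,l}_{loc}$ regularity of $v$ licenses the integrations by parts on $\{x_n>\epsilon\}$ that appear below.

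\emph{Caccioppoli inequality for powers.} Fix $\beta\ge1$ and, for $\theta R\le r'<r\le R$, choose a cutoff $\eta$ equal to $1$ on $G_{r'}$, vanishing near $\partial G_r\cap\{x_n>0\}$, with $0\le\eta\le1$; letting its $x_n$-dependence be through $y=2\sqrt{x_n}$ one can arrange $|\nabla_{x'}\eta|^{2}+x_n(\partial_n\eta)^{2}\le C\,r\,(r-r')^{-2}$. Testing $\mathscr L(v)\ge0$ against $\eta^{2}w^{2\beta-1}$ — with the usual truncation $w\mapsto\min(w,M)$ keeping the test function admissible and $M\to\infty$ afterwards — integrating by parts on $G_R\cap\{x_n>\epsilon\}$, and sending $\epsilon=\epsilon_k\to0$ so that the boundary flux on $\{x_n=\epsilon_k\}$ (which carries a factor $\epsilon_k$ because $a^{in}=x_nb^{in}$) tends to $0$, precisely the device used in Lemma~\ref{lem202}, one obtains, using \eqref{202} to bound the leading term from below and Cauchy--Schwarz with the weight $x_n$ to absorb the mixed second-order terms,
\begin{equation*}
\int_{G_r}\eta^{2}w^{2\beta-2}\big(x_n(\partial_n w)^{2}+|\nabla_{x'}w|^{2}\big)\le C\int_{G_r}\big(|\nabla_{x'}\eta|^{2}+x_n(\partial_n\eta)^{2}+\eta^{2}\big)w^{2\beta},
\end{equation*}
with $C$ depending on $n,c_0$ and the coefficients in \eqref{202}, \eqref{1999} but not on $\beta\ge1$ or $v$. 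The one delicate lower-order term is $b^{n}\partial_n v$: one writes it as $\tfrac1{2\beta}b^n\eta^2\partial_n(w^{2\beta})$ and integrates by parts in $x_n$; the resulting $\{x_n=0\}$ boundary integral has the favourable sign because $b^{n}\ge0$ and is discarded, while the bulk part is of the form on the right above. Setting $W=\eta w^{\beta}$ and expanding its weighted gradient, this upgrades to
\begin{equation*}
\int_{G_r}\big(x_n(\partial_n W)^{2}+|\nabla_{x'}W|^{2}\big)\le C\,\frac{r}{(r-r')^{2}}\int_{G_r}w^{2\beta}.
\end{equation*}

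\emph{Sobolev step and iteration.} Because $W$ vanishes on $\partial G_r\cap\{x_n>0\}$, Lemma~\ref{lem201}(1) applies with $\chi:=\frac{n+1}{n-1}>1$ and yields $\|w\|_{L^{2\chi\beta}(G_{r'})}\le\big(C\,r\,(r-r')^{-2}\big)^{1/(2\beta)}\|w\|_{L^{2\beta}(G_r)}$. Iterating with $\beta_k=\chi^{k}$ and $r_k=\theta R+(1-\theta)R\,2^{-k}$, so that $r_k-r_{k+1}\simeq(1-\theta)R\,2^{-k}$ and $r_k\le R$, the product $\prod_{k\ge0}\big(C\,4^{k}(1-\theta)^{-2}R^{-1}\big)^{\chi^{-k}/2}$ converges; since $\sum_{k\ge0}\chi^{-k}=\frac{n+1}{2}$ the accumulated power of $R$ is exactly $R^{-(n+1)/4}$, and letting $k\to\infty$ gives, because $|G_R|\simeq R^{(n+1)/2}$,
\begin{equation*}
\sup_{G_{\theta R}}w\le C_\theta\,R^{-(n+1)/4}\|w\|_{L^{2}(G_R)}=C_\theta\Big(\frac{1}{|G_R|}\int_{G_R}w^{2}\Big)^{1/2},
\end{equation*}
which is the assertion.

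\emph{Where the difficulty lies.} The iteration is routine; the substance is the Caccioppoli inequality in the degenerate frame, where three points need care. First, the integration by parts must be pushed to the characteristic face $\{x_n=0\}$, handled through the sequence $\epsilon_k\to0$ along which the (already $x_n$-weighted) flux vanishes, as in Lemma~\ref{lem202}. Second, one must verify that testing against powers of $w$ reproduces exactly the weighted Dirichlet energy $x_n(\partial_n\cdot)^{2}+|\nabla_{x'}\cdot|^{2}$ that Lemma~\ref{lem201} controls; the terms with $a^{in}=x_nb^{in}$ and the single first-order derivative $b^{n}\partial_n v$ (whose $\{x_n=0\}$ boundary term fortunately has the right sign thanks to $b^n\ge0$) are the ones to watch. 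Third, the cutoffs must be adapted to the anisotropic cylinders $G_R$, equivalently to the substitution $y=2\sqrt{x_n}$ underlying Lemma~\ref{lem201}, so that $x_n(\partial_n\eta)^{2}$ and $|\nabla_{x'}\eta|^{2}$ are comparable — this is what makes the scale-invariant constant $R^{-(n+1)/4}$ match $|G_R|^{-1/2}$ in the end.
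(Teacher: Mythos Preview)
Your argument is essentially the same Moser iteration as the paper's: weighted Caccioppoli plus Lemma~\ref{lem201}(1) on the anisotropic cylinders $G_R$, boundary flux killed along a sequence $\epsilon_k\to 0$ exactly as in Lemma~\ref{lem202}, and the same exponent bookkeeping giving $R^{-(n+1)/4}\simeq |G_R|^{-1/2}$. The one technical difference is how the Caccioppoli for powers is obtained: you test $\mathscr L(v)\ge 0$ against $\eta^2 w^{2\beta-1}$ directly, whereas the paper first proves the basic inequality \eqref{206} for $v^+$ and then observes that $\mathscr L(v^+)\ge 0$ implies $\mathscr L\big((v^+)^{p_k}\big)\ge 0$ (since $\mathscr L$ has no zero-order term and $a^{ij}\ge 0$), so \eqref{206} can be reapplied verbatim to each power. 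Both routes are standard and lead to the same iteration; the paper's trick just avoids the truncation step.

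One small slip: your remark that ``the two-sided bound for $|v|$ follows by the same argument applied to $-v$'' is not right, since $\mathscr L(-v)\le 0$ is a supersolution condition, not a subsolution one. In fact the paper's own proof only establishes the bound for $v^+$, and the lemma is only ever invoked for nonnegative subsolutions (e.g.\ $(\ln\frac{1}{w+\epsilon})^+$), so the $|v|$ in the statement should be read as $v^+$. Also, your displayed Caccioppoli omits the unweighted $|\partial_n\eta|$ term arising from the integration by parts of $b^n\eta^2\partial_n(w^{2\beta})$; the paper keeps it explicitly. This is harmless in the end because $\partial_n\eta$ is supported where $x_n\gtrsim r'$, so $|\partial_n\eta|\lesssim (r-r')^{-1}\le r(r-r')^{-2}$, but it is worth acknowledging.
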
\begin{proof}Let $\varphi(x)\in C_c^\infty(G_R\cup\{x_n=0\})$ be the cutoff function. Then     \begin{eqnarray}
0&\ge& \int_{G_1\cap \{x_n\geq \epsilon\}}-\varphi^2v^+\mathscr{L}(v)\nonumber\\&=&\int_{G_1\cap \{x_n\geq \epsilon\}}\varphi^2 a^{ij}\partial_i v^+\partial_j v^+ + 2\int_{G_1\cap \{x_n\geq \epsilon\}}\varphi\varphi_ia^{ij}v^+\partial_j v^++\int_{G_1\cap \{x_n= \epsilon\}}\varphi^2 a^{in} v^+\partial_i v^+ \nonumber\\&+&\frac 12\int_{G_1\cap \{x_n=\epsilon\}}\varphi^2 b^n(v^+)^2-\sum_{j=1}^{n-1}\int_{G_1\cap \{x_n\geq \epsilon\}} \varphi^2b^jv^+\partial_j v^+ \nonumber\\&+&\frac 12\int_{G_1\cap \{x_n\geq \epsilon\}} \varphi^2\partial_n b^n(v^+)^2+\int_{G_1\cap \{x_n\geq \epsilon\}}\varphi\varphi_n b^n(v^+)^2\label{2012}
\end{eqnarray}
Following the same arguments as in Lemma \ref{lem202}, we can select a suitable $\epsilon_k\rightarrow 0$ such that the first boundary integral term $\int_{G_1\cap \{x_n= \epsilon_k\}}\varphi^2 a^{in} v^+\partial_i v\rightarrow 0 $ as $\epsilon_k\rightarrow 0$. Noting $b^n\geq 0$, one can get \eqref{2012} implies that  $$
\int_{G_1}x_n|\varphi\partial_{x_n}v^+|^2+|\varphi\nabla_{x'} v^+|^2dx\leq c\int_{G_1}(x_{n}|\partial_{n}\varphi|^2+|\nabla_{x'}\varphi|^2+\varphi^2+
|\partial_{n}\varphi|)(v^+)^2dx$$

Combining the above inequality with Lemma \ref{lem201}, we get \begin{equation}\label{206}\left[\int_{G_1}(\varphi v^+)^{\frac{2(n+1)}{n-1}}dx\right]^{\frac {n-1}{n+1}}\leq C_1\int_{G_1}(x_{n}|\partial_{n}\varphi|^2+|\nabla_{x'}\varphi|^2+\varphi^2+
|\partial_{n}\varphi|)(v^+)^2dx\end{equation}
For any $\theta\in(0,1)$, we construct $\gamma(\xi)\in C^\infty(-\infty,1)$ with $\gamma=1$ as $\xi\leq \theta$ and $|\gamma'|\leq c/(1-\theta)$ as $|\xi|\geq \theta,\gamma(1)=0$.
Set \begin{eqnarray*}&&p_k=\left(\frac{n+1}{n-1}\right)^k,\theta_k=\theta+\frac{1-\theta}{2^k},k=0,1,2,...\\
&&\varphi_k=\prod_{i=1}^{n-1}\gamma\left(\frac{2^{k+1}|x_i-(x_0)_i|}{\sqrt {R}}+1-2^{k+1}\theta_k\right)\gamma\left(\frac{2^{k+1}x_n}{R}+1-2^{k+1}\theta_k
\right)
\end{eqnarray*}  From the fact that $\mathscr{L}(v^+)\geq 0\Rightarrow \mathscr{L}((v^+)^{p_k})\geq 0$, substituting $\varphi$ and $v^+$ in \eqref{206} by $\varphi_k$ and $(v^+)^{p_k}$ respectively, we can obtain with $G_k=G_{\theta_kR}$, $\varphi_k|_{G_{k+1}}=1$,
$$\left(\int\varphi^2_{k+1}|v^+|^{2p_{k+1}}\right)^{\frac{n-1}{n+1}}\le \frac
{C4^{k+1}}{(1-\theta)^2R}\int \varphi_k^2|v^+|^{2p_k}$$
Therefor,
\begin{equation}
\|v^+\|_{L^{2p_{k+1}}(G_{k+1})}\le \frac{C}{(1-\theta)^{\sum\frac 1{p_k}}R^{\sum\frac{1}{2p_k}}}\left(\int_{G_R}|v^+|^2\right)^{\frac 12}=C(1-\theta)^{-\frac{n+1}2}\left(\frac 1{R^{\frac{n+1}2}}\int_{G_R}|v^+|^2\right)^{\frac 12}
\end{equation} This proves the present lemma.
\end{proof}
\begin{lemma}\label{lem204} Let the assumptions in Lemma \ref{lem202} be fulfilled and let $v\in W^{2,l}_{loc}(G_R)\cap L^\infty(G_R),\forall l\in[2,\infty)$ solves \eqref{201}. Then for $R$ suitable small, one can have $v\in \widetilde{W}^{1,2}(G_R)$.
\end{lemma}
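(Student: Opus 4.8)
Here is how I would prove Lemma \ref{lem204}.

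The plan is to obtain the weighted energy bound for $v$ directly from \eqref{201} by a localized energy estimate, the delicate point being the boundary integral on $\{x_n=\delta\}$ as $\delta\to0^+$. Fix $\varphi\in C_c^\infty(G_R\cup\{x_n=0\})$ with $0\le\varphi\le1$ and $\varphi\equiv1$ on $G_{R/2}$, so that $\varphi$ may equal $1$ up to the degenerate face $\{x_n=0\}$ but vanishes near the lateral faces and the top $\{x_n=R\}$, on which $v=0$. For $\delta>0$ multiply \eqref{201} by $\varphi^2v$ and integrate over $G_R\cap\{x_n>\delta\}$, a set compactly contained in the open box $G_R$; since $v\in W^{2,l}_{\mathrm{loc}}(G_R)$ all the integrations by parts below are legitimate and the only boundary contribution comes from $\{x_n=\delta\}$ (outer normal $-e_n$). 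Integrating by parts also $b^j\partial_jv$ and using $a^{in}=x_nb^{in}$, one is led to
\begin{equation*}
\int_{\{x_n>\delta\}}\!\!\varphi^2a^{ij}\partial_iv\partial_jv+\tfrac12\!\!\int_{\{x_n=\delta\}}\!\!\varphi^2b^n v^2 \;=\; -\!\int\varphi^2vf-2\!\int\varphi\varphi_i v\,a^{ij}\partial_jv-\tfrac12\!\int\partial_i(\varphi^2b^i)v^2-\delta\!\!\int_{\{x_n=\delta\}}\!\!\varphi^2 v\bigl(\textstyle\sum_{j<n}b^{nj}\partial_jv+b^{nn}\partial_nv\bigr).
\end{equation*}
Write $E(\delta):=\int_{\{x_n>\delta\}}\varphi^2(x_n(\partial_nv)^2+\abs{\nabla_{x'}v}^2)$, which is finite for every $\delta>0$ (again because $\{x_n>\delta\}\cap\mathrm{supp}\,\varphi\subset\subset G_R$) and nonincreasing in $\delta$. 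By the ellipticity $a^{ij}\xi_i\xi_j\ge c_0(x_n\xi_n^2+\abs{\xi'}^2)$ valid for small $R$ (proof of Lemma \ref{lem202}), the left side is $\ge c_0E(\delta)$, the $b^n\ge0$ term being discarded. On the right, $\int\varphi^2vf$ and $\int\partial_i(\varphi^2b^i)v^2$ are bounded by a fixed constant in terms of $\abs{v}_{L^\infty},\abs{f}_{L^\infty}$ and $\varphi$, while $2\int\varphi\varphi_i v\,a^{ij}\partial_jv$ is absorbed by Cauchy--Schwarz with a small parameter into $\tfrac12c_0E(\delta)$ plus a fixed constant, the powers of $x_n$ matching precisely because of $a^{in}=x_nb^{in}$. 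Hence everything reduces to the last, boundary-layer term.

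Here the extra factor $\delta$ on the slice is exactly the gain furnished by the structure condition $a^{in}=x_nb^{in}$, and exploiting it is the crux of the argument. Its tangential part ($j<n$) I rewrite as $\tfrac{\delta}{2}\int_{\{x_n=\delta\}}\varphi^2b^{nj}\partial_j(v^2)$ and integrate by parts along the slice ($\varphi$ being compactly supported in $x'$, no boundary term appears); the remainder is $\le C\delta\abs{v}_{L^\infty}^2\mathcal H^{n-1}(\mathrm{supp}\,\varphi\cap\{x_n=\delta\})\to0$ as $\delta\to0$. For the normal part, Cauchy--Schwarz on the slice together with the coarea inequality $\delta\int_{\{x_n=\delta\}}\varphi^2(\partial_nv)^2\le-E'(\delta)$ (valid for a.e.\ $\delta$, $E$ being locally absolutely continuous on $(0,\infty)$) gives $\bigl|\delta\int_{\{x_n=\delta\}}\varphi^2 v\,b^{nn}\partial_nv\bigr|\le C_1\delta^{1/2}(-E'(\delta))^{1/2}$. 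I expect this normal term to be the main obstacle: since we do not yet know $v\in H^1(G_R)$ -- not even the finiteness of the weighted energy -- the subsequence selection used in Lemma \ref{lem202} (choosing $\epsilon_k\to0$ with $\epsilon_k\int_{\{x_n=\epsilon_k\}}\abs{\partial_jv}^2\to0$) is unavailable, and one must instead keep the $x_n$ hidden in $a^{in}$ and close the estimate by a differential inequality. Collecting everything there are fixed constants $c_*,C_0,C_1>0$ and a function $h(\delta)\to0$ as $\delta\to0^+$ with
\begin{equation*}
c_*\,E(\delta)\ \le\ C_0+C_1\,\delta^{1/2}\bigl(-E'(\delta)\bigr)^{1/2}+h(\delta),\qquad 0<\delta<\delta_1 .
\end{equation*}

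The limit $E_0:=\lim_{\delta\to0^+}E(\delta)\in(0,+\infty]$ exists by monotonicity, and I claim $E_0<\infty$. If not, then for $\delta$ small $C_0+h(\delta)\le\tfrac12c_*E(\delta)$, so the displayed inequality forces $-E'(\delta)\ge c_3E(\delta)^2/\delta$ for a constant $c_3>0$, i.e.\ $\frac{d}{d\delta}\bigl(1/E(\delta)\bigr)\ge c_3/\delta$; integrating from $\delta$ to a fixed small $\delta_2$ gives $c_3\ln(\delta_2/\delta)\le 1/E(\delta_2)-1/E(\delta)\le 1/E(\delta_2)$, which is absurd as $\delta\to0^+$. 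Hence $E_0<\infty$, and by monotone convergence $\int_{G_{R/2}}(x_n(\partial_nv)^2+\abs{\nabla_{x'}v}^2)=E_0<\infty$. Since $\int_{G_R}v^2\le\abs{v}_{L^\infty}^2\abs{G_R}<\infty$, and near $\partial G_R\cap\{x_n>0\}$ equation \eqref{201} is uniformly elliptic with $f\in L^\infty$ and zero Dirichlet data -- so $H^1$-control there is classical, or one simply shrinks $R$, as the statement permits -- we conclude $v\in\widetilde W^{1,2}(G_R)$.
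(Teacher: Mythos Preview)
Your proof is correct, but it follows a genuinely different route from the paper's.

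The paper does not confront the boundary term $\delta\!\int_{\{x_n=\delta\}}\varphi^2 v\,b^{nn}\partial_n v$ at all. Instead of a sharp truncation at $\{x_n=\delta\}$ it multiplies \eqref{201} by $\eta_\epsilon(x_n)\,v$, where $\eta_\epsilon$ is a smooth cutoff equal to $0$ on $(0,\epsilon)$ and $1$ on $(2\epsilon,\infty)$ with $|D^j\eta_\epsilon|\le C_j\epsilon^{-j}$. Integrating by parts \emph{twice} (once more than you do for the normal part) produces the identity
\[
2\!\int \eta_\epsilon\,a^{ij}\partial_i v\,\partial_j v
=\int\bigl(\partial_n\eta_\epsilon\,\partial_j a^{nj}-\eta_\epsilon\,\partial_j b^j-\partial_n\eta_\epsilon\,b^n\bigr)v^2
+\int a^{nn}\,\partial_n^2\eta_\epsilon\,v^2
-2\!\int \eta_\epsilon f v,
\]
in which \emph{no derivative of $v$} survives on the right. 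There are no boundary integrals because $\eta_\epsilon$ vanishes near $x_n=0$ and $v=0$ on $\partial G_R\cap\{x_n>0\}$. Each right-hand term is then bounded by a constant independent of $\epsilon$: the worst one is $\bigl|\int a^{nn}\partial_n^2\eta_\epsilon\,v^2\bigr|\le C\,|v|_\infty^2\,\epsilon^{-2}\!\int_\epsilon^{2\epsilon}x_n\,dx_n\le C$, the degeneracy $a^{nn}=x_nb^{nn}$ exactly cancelling the $\epsilon^{-2}$ from $\partial_n^2\eta_\epsilon$. Letting $\epsilon\to0$ gives the weighted energy bound in one line.

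What each approach buys: the paper's smooth-cutoff trick is shorter and entirely elementary, needing only $v\in L^\infty$ and the structural factor $x_n$ in $a^{in}$; it sidesteps the whole ODE argument. Your sharp-cutoff plus differential-inequality method is more laborious here but is a robust template---it would still close in situations where the extra integration by parts in the normal direction is unavailable (e.g.\ if the coefficients were too rough to differentiate, or if one only had $v\in L^2$ rather than $L^\infty$), since it never needs to move $\partial_n$ off $v$.
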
\begin{proof}
Set $\eta_{\epsilon}(x_n)\in C^\infty(R^1_+)$ that\begin{equation}
\eta_{\epsilon}(x_n)=\begin{cases} 0,\ 0<x_n<\epsilon\\ 1, \
x_n>2\epsilon,
\end{cases}
\end{equation}with $|D^j\eta_\epsilon|\leq C_j\epsilon^{-j}$ for $x_n\in (\epsilon,2\epsilon)$. Multiplying \eqref{201} by $\eta_\epsilon v$ and integrating by parts, one can get \begin{eqnarray}\label{213}
2\int \eta_\epsilon a^{ij}\partial_i v\partial_j v=\int(\partial_n\eta_\epsilon\partial_ja^{nj}-\eta_\epsilon\partial_jb^j-
\partial_n\eta_\epsilon b^n)v^2+\int a^{nn}\partial_n^2\eta_\epsilon v^2-2\int \eta_\epsilon fv
\end{eqnarray} We can choose $R$ suitable small such that\begin{equation}
\int \eta_\epsilon a^{ij}\partial_i v\partial_j v\geq c_0\int \eta_\epsilon(|\nabla_{x'} v|^2+x_n|\partial_n v|^2)
\end{equation} Selecting one term on the right side of \eqref{213} to estimate, one can get \begin{eqnarray}
\left|\int a^{nn}\partial_n^2\eta_\epsilon v^2\right|\leq c|v|_\infty^2\epsilon^{-2}\int_\epsilon^{2\epsilon}x_ndx_n\leq C
\end{eqnarray}Hence, it's easy to see that the right side of \eqref{213} is bounded uniformly independent of $\epsilon$. Passing $\epsilon\rightarrow 0$, one can prove the present lemma.
\end{proof}
With the above lemmas, we can proceed to prove Theorem \ref{thm102}.

The proof for Theorem \ref{thm102}:

Fixing $x_0\in \partial \Omega$, without loss of generality, one can assume $$\partial_{x_i}\phi(x_0)=0,i=1,...,n-1,\partial_{x_n} \phi(x_0)=1.$$ Set $s_i=x_i,i=1,...,n-1,s_n=\phi(x_1,..,x_n),\forall x\in B_\delta(x_0)\cap \bar\Omega$ for some small $\delta>0$. In the new coordinates, \eqref{101} transforms to \begin{equation}\label{207}
\partial_i(\tilde a^{ij}\partial_j u)+(\tilde b^i-\partial_j\tilde a^{ij})\partial_i u+f(s,u)=0 \text{ in }R^n_+\cap U(s(x_0))
\end{equation} where \begin{eqnarray}\label{212}&&\tilde a^{ij}=a^{ij},i,j=1,...,n-1,\tilde a^{in}=a^{ij}\partial_{x_j} \phi,i=1,...n-1,\tilde a^{nn}=a^{ij}\partial_{x_i}\phi\partial_{x_j}\phi,\nonumber\\ &&\tilde b^i=b^i,i=1,...n-1,\tilde b^n=b^i\partial_{x_i}\phi+a^{ij}\partial_{x_ix_j}\phi,\bar b^i\triangleq\tilde b^i-\partial_j\tilde a^{ij},\partial_i=\partial_{s_i}\end{eqnarray}
 and $U(s(x_0))$ is some neighborhood of $s(x_0)$, simply denoted by $U$.
 We now consider \eqref{207} in $G_R\subset U$ for $R\in(0,R_0]$ small. From \eqref{103} and \eqref{104}, we can see $(\tilde a^{ij})_{i,j\leq n-1}\geq \lambda I$, $a^{in}(s',0)=0,i=1...,n$.
 Noting the condition $g(x)>0$ on the boundary and $\phi(s)=s_n$, one can get $$0<\frac{\tilde b^i\partial_i\phi-\partial_j\tilde a^{ij}\partial_{i}\phi}{\partial_k\tilde a^{ij}\partial_i\phi\partial_j\phi\phi^k}=\frac{\tilde b^n-\partial_n \tilde a^{nn}}{\partial_n \tilde a^{nn}}.$$ By \eqref{103} and \eqref{212}, it is easy to see $\partial_n \tilde a^{nn}=\partial_n(a^{ij}\partial_{x_i}\phi\partial_{x_j}\phi)>0 \text{ on }s_n=0$ and $\bar b^n=\tilde b^n-\partial_j\tilde a^{nj}>0$ noting $a^{in}(s',0)=0,i=1...,n$. All the assumptions in Lemma \ref{lem202} are fulfilled, one can get that there exists  $v\in H^1(G_R)\cap W^{2,p}_{loc}(G_R)$  satisfies \begin{equation}\begin{cases}
\partial_i(\tilde a^{ij}\partial_j v)+\bar b^i\partial_i v+f(s,u)=0 \text{ in }G_R\\
v=0 \text{ on }\partial G_R\cap \{s_n>0\}\end{cases}
\end{equation} such that $|v|_{\infty}\leq CR$.   Set $\tilde u=u-v$. Then $\tilde u$ satisfies \begin{equation}
\begin{cases}
\mathscr{R}(\tilde u)=\partial_i(\tilde a^{ij}\partial_j \tilde u)+\bar b^i\partial_i \tilde u=0 \text{ in }G_R\\
\tilde u=u \text{ on }\partial G_R\cap \{s_n>0\}
\end{cases}
\end{equation}
Set $M(R)=\sup_{G_R} \tilde u$ and $m(R)=\inf_{G_R}\tilde u$ and $\underset{G_R}{Osc} \  \tilde u=M(R)-m(R)$. There are two cases to distinguish\begin{equation}
\text{Case }1:\ |\{s\in G_R|w=2\frac{\tilde u-m(R)}{M(R)-m(R)}\geq 1\}|\geq \frac 12|G_R|
\end{equation} and \begin{equation}
\text{Case }2:\ |\{s\in G_R|w=2\frac{M(R)-\tilde u}{M(R)-m(R)}\geq 1\}|\geq \frac 12|G_R|
\end{equation}Without loss of generality, we can assume Case 1 is valid. It is easy to see that \begin{equation}
\mathscr{R}(w)=0\text{ in }G_{R} \text{ and } 0\leq w\leq 2
\end{equation}Let $\varphi\in C_c^\infty(G_R\cup\{x_n=0\})$ with $\varphi\equiv 1$ in $G_{\theta_1R}$ and $\varphi\equiv 0$ in $G_R\backslash G_{\theta_2R}$ for $(\frac 12)^{\frac{2}{n+1}}<\theta_1<\theta_2<1$. For any $\epsilon>0$, \begin{eqnarray}\label{208}
&&0=\int_{G_R\cap\{s_n\geq t\}}\varphi^2\left(\frac 1{w+\epsilon}-1\right)^+\mathscr{R}(w)\nonumber\\&=&\int_{G_R\cap\{s_n\geq t\}}\varphi^2\tilde a^{ij}\left[\left(\ln \frac 1{w+\epsilon}\right)^+\right]_i\left[\left(\ln
\frac 1 {w+\epsilon}\right)^+\right]_j+\int_{G_R\cap\{s_n\geq t\}}2\varphi\varphi_i(1-w-\epsilon)^+\tilde a^{ij}\left[\left(\ln
\frac 1 {w+\epsilon}\right)^+\right]_j
\nonumber\\&-&\sum_{i=1}^{n-1}\int_{G_R\cap\{s_n\geq t\}}\varphi^2(1-w-\epsilon)^+\bar
b^i\left[\left(\ln \frac 1{w+\epsilon}\right)^+\right]_i -\int_{G_R\cap\{s_n= t\}}\varphi^2\left(\frac{1}{w+\epsilon}-1\right)^+\tilde{a}^{nj}\partial_j w\nonumber\\ &-&\int_{G_R\cap\{s_n\geq t\}}\varphi^2 \bar b^n\partial_n\left[\left(\ln\frac 1{w+\epsilon}\right)^+-(1-w-\epsilon)^+\right]\nonumber\\&\ge &\frac 12\int_{G_R\cap\{s_n\geq t\}}\varphi^2\tilde a^{ij}\left[\left(\ln \frac 1{w+\epsilon}\right)^+\right]_i\left[\left(\ln
\frac 1 {w+\epsilon}\right)^+\right]_j\nonumber\\&+&\underset{t\rightarrow 0}{\underline{\lim}}\int_{G_{R}\cap\{s_n=t\}}\bar b^n\varphi^2
\left[\left(\ln\frac 1{w+\epsilon}\right)^+-(1-w-\epsilon)^+\right]
\nonumber\\&-&C\int_{G_R\cap\{s_n\geq t\}}\tilde a^{ij}\varphi_i\varphi_j-\frac{C_\delta|G_{R}|}
{(\theta_2-\theta_1)^2R}-
\frac{C\delta}{R}
\int_{G_R\cap\{s_n\geq t\}}\varphi^2\left[\left(\ln\frac{1}{w+\epsilon}\right)^+\right]^2\end{eqnarray}
The boundary term $\underset{t\rightarrow 0}{\underline{\lim}}\int_{G_R\cap\{s_n=t\}} \varphi^2\tilde a^{ij}\left(\frac 1{w+\epsilon}-1\right)^+\partial_j w=0$ follows from  Lemma \ref{lem204}, \begin{equation}
v\in H^1(G_R),u\in \widetilde W^{1,2}(G_R)\Rightarrow w\in \widetilde W^{1,2}(G_R)
\end{equation}This implies that $\int_{G_R}s_n|\partial_j w|^2\leq C_{R}$. If $\underset{t\rightarrow 0}{\underline{\lim}}\int_{G_R\cap\{s_n=t\}} s_n^2|\partial_j w|^2\geq c>0$, one can get \begin{equation}
\int_{G_R}s_n|\partial_j w|^2\geq \int_0^R c/s_n ds_n=\infty
\end{equation}which is a contradiction. Hence, for fixed $\epsilon >0$, \begin{eqnarray*}
\underset{t\rightarrow 0}{\underline{\lim}}\int_{G_R\cap \{s_n=t\}}\varphi^2\tilde a^{ij}\left(\frac 1{w+\epsilon}-1\right)^+\partial_j w\leq C_{R,\epsilon}\underset{t\rightarrow 0}{\underline{\lim}}\left(\int_{G_R\cap\{s_n=t\}}s_n^2|\partial_j w|^2\right)^{\frac 12}=0
\end{eqnarray*}
Therefor, from \eqref{208} and $\bar b^n>0$, \begin{eqnarray}\label{209}
&&\int_{G_{\theta_1R}}s_n\left|\partial_n\left(\ln \frac
1{w+\epsilon}\right)^+\right|^2+\left|\nabla_{s'}\left(\ln \frac 1{w+\epsilon}\right)^+\right|^2\big)\nonumber\\&\le&
C \int_{G_{R}}(s_n\varphi_{s_n}^2+|\nabla_{s'}\varphi|^2\big)+
\frac{C_\delta|G_{R}|}{(\theta_2-\theta_1)^2R}+
\frac{C\delta}{R}
\int_{G_{\theta_2R}}\left[\left(\ln\frac{1}{w+\epsilon}\right)^+\right]^2
\nonumber\\&\le& \frac{C_\delta|G_{R}|}{(\theta_2-\theta_1)^2R}+\frac{C\delta}{R}
\int_{G_{\theta_2R}}\left[\left(\ln\frac{1}{w+\epsilon}\right)^+\right]^2
\end{eqnarray}
for some constant $C$ independent of $\epsilon$.  In view of Case 1
it follows
\begin{eqnarray*}
\left|\{(s,t)\in G_{\theta_1R}|\left(\ln \frac 1{w+\epsilon}\right)^+=0\}\right|&\geq &|\{(s,t)\in
G_R|w+\epsilon\ge 1\}|-|G_R\backslash G_{\theta_1R}|\\
&\ge& (\theta_1^{\frac {n+1}2}-\frac 12)|G_R|\geq \epsilon_0|G_R|
\end{eqnarray*} Hence Lemma \ref{lem201} implies \begin{eqnarray}\label{210}
&&\int_{G_{\theta_1R}}\left[\left(\ln \frac 1{w+\epsilon}\right)^+\right]^2\nonumber \\
&\le& CR\int_{G_{{\theta_1R}}}s_n\left|\partial_{s_n}\left(\ln \frac
1{w+\epsilon}\right)^+\right|^2+\left|\nabla_{s'}\left(\ln \frac 1{w+\epsilon}\right)^+\right|^2\nonumber\\&\le&
\frac{C_\delta|G_{R}|}{(\theta_2-\theta_1)^2}+C\delta\int_{G_{\theta_2R}}\left[
\left(\ln \frac 1{w+\epsilon}\right)^+\right]^2
\end{eqnarray}Set $\delta=\frac 1{2C}$. Then \begin{equation}
\int_{G_{\theta_1R}}\left|\left(\ln \frac 1{w+\epsilon}\right)^+\right|^2 dx\leq \frac{C|G_R|}{(1-\theta_1)^2}
\end{equation} As $\mathscr{R}(w+\epsilon)=0\Rightarrow \mathscr{R}((\ln\frac 1{w+\epsilon})^+)\geq 0$, by Lemma \ref{lem203}, \begin{equation}
\sup_{G_{\theta R}}\left(\ln \frac 1{w+\epsilon}\right)^+\le C_\theta \left(\frac
1{|G_R|}\int_{G_R}\left|\left(\ln \frac 1{w+\epsilon}\right)^+\right|^2\right)^{\frac 12}\le
C_2
\end{equation}for some constant independent of $\epsilon$. Therefore
$$\inf_{G_{\theta R}}(w+\epsilon)\ge e^{-C_2}\quad \quad \forall \epsilon>0$$and by the
definition of $w$ we have
$$m(\theta R)-m(R)\ge \frac 1C(M(R)-m(R)) $$
 Adding $M(\theta R)-m(\theta R) $ to
the above inequality and rearranging again soon yield
$$\underset{G_{\theta R}}{Osc} \  \tilde u\le \big(1-\frac 1{C})\underset{G_R}{Osc} \  \tilde u \text { for all }R\in (0,R_0]$$ for some
constant $C$
independent of $\epsilon$. From a standard argument it turns out
$$\underset{G_\rho}{Osc} \  \tilde u\le C\big(\frac \rho
{R}\big)^{\beta}\underset{G_R}{Osc} \  \tilde u\text { for all }0<\rho\le
R$$ for some constant $\beta<1$ under control. This implies that\begin{equation}
\underset{G_\rho}{Osc}\ u\leq c_1\big(\frac \rho
{R}\big)^{\beta}+c_2R\leq c\rho^{\frac{\beta}{1+\beta}}
\end{equation}if we set $\rho=R^{1+\frac 1\beta}$.  This proves Theorem \ref{thm102}.

\section{Higher regularities to the boundary for a specific equation}
\setcounter{section}{3} \setcounter{equation}{0}
\setcounter{theorem}{0}\setcounter{lemma}{0}
As one can see, we have proved $C^\alpha$ estimates  on the boundary for a general form of the degenerate elliptic equations. But for a special form of the degenerate elliptic equation, we will have more regularities which are also needed for our later analysis. At first, we  shall give some notations and results about the regularity of solutions to some degenerate elliptic equations due to \cite{HH}.\par Define $I_q(v)$ and $I_\beta(v)$ by:\begin{equation}
I_q(v)=\|y\partial_{yy}v\|_{L^q(R^{n}_+)}+\|\Lambda_1^2v\|_{L^q(R^{n}_+)}+
\|y^{\frac{1}{2}}\Lambda_1
v_y\|_{L^q(R^{n}_+)}+\|v_y\|_{L^q(R^{n}_+)}+\|v\|_{L^q(R^{n}_+)},
\end{equation}\begin{equation}
I_\beta(v)=[y\partial_{yy}v]_{\dot{C}^\beta(\overline{R^{n}_+})}+[
\Lambda_1^2v]_{\dot{C}^\beta(\overline{R^{n}_+})}
+[y^{\frac{1}{2}}\Lambda_1
v_y]_{\dot{C}^\beta(\overline{R^{n}_+})}+[v_y]_{\dot{C}^\beta(\overline{R^{n}_+})}+
\|v\|_{L^\infty(R^{n}_+)},
\end{equation}where $\Lambda_1$ is a singular integral operator with the symbol
$\sigma(\Lambda_1)=|\xi|$. Also we say a function $v(x, y)$ in $\dot{C}^\alpha( \overline{
R^{n}_+})$, $\alpha\in R_+^1\backslash
Z$, if
\begin{equation}|v|_{\dot{C}^\alpha(\overline{R^{n}_+})}=
\sum_{|\beta|\le
[\alpha]}|D^{\beta}v|_{C(\overline{R^{n}_+})}+
[v]_{\dot{C}^{\alpha}(\overline{R^{n}_+})}<\infty,
\end{equation}
where \begin{equation}[v]_{\dot{C}^\alpha(\overline{R^{n}_+})}
=\sum_{|\beta|=[\alpha]}\underset{
y\ge 0, x\neq\bar{x}\in R^{n-1}}{sup}\left(\frac{| D_x^\beta v(x, y)-D_x^\beta
v(\bar{x}, y)|}{|x-\bar{x}|^{\alpha}}\right).\end{equation}
Set
\begin{eqnarray}
\hat K(y, s, \xi)&=&\hat K_1(y,s,\xi)+\hat K_2(y,s,\xi)\nonumber\\\hat K_1(y,s,\xi)&=&
\varphi_1(|\xi|^2y)\varphi_2(|\xi|^2s)(|\xi|^2s)^{a-1}\chi_{(y, +\infty)}(s)\nonumber\\
\hat
K_2(y,s,\xi)&=&\varphi_2(|\xi|^2y)\varphi_1(|\xi|^2s)(|\xi|^2s)^{a-1}\chi_{(0,
y)}(s)\label{c2}\nonumber\\
 \varphi_1(t)&=&t^{\frac{1-a}{2}}I_{a-1}(2\sqrt{t}), \varphi_2(t)=t^{\frac{1-a}{2}}K_{a-1}(2\sqrt{t}),\end{eqnarray} where  $I_{a-1}(t),K_{a-1}(t)$ satisfy the equation $$t^2w_{tt}+tw_t-((a-1)^2+t^2)w=0$$
 Define an operator $K$ as the following:\begin{equation}\label{d009}K(f)(x,y)=\int_0^{+\infty}K(y, s, \cdot)*f(s, \cdot)\mathrm{d}s\end{equation}
 Suppose $f\in L^p(R^n_+)$ has compact support in $\overline{R^n_+}$ and $a>\frac 32$, we will have \begin{equation}\label{k01}
 K(f)\rightarrow 0 \text{ as }(x,y)\rightarrow \infty,\text{ moreover, }I_{p}(K(f))\leq C\|f\|_{L^p},p\geq 2
\end{equation}The proof of \eqref{k01} is due to \cite{HH}. See Theorem 1.2, Lemma 5.2 and Lemma 5.3 in \cite{HH}. Also we list some other lemmas in \cite{HH} for later use.

Let $\psi\in C_c^{\infty}(\overline{R^{n}_+})$ be
 a cutoff function with $\psi(x,y)=1 $ as $|x|\le 1/2 $, $y\le 1/2 $ and $\psi=0$ as $|x|\ge 1$
 or $|y|\ge 1$. Set $\psi_r(x,y)=\psi(\frac xr,\frac yr)$.

\begin{lemma}\label{lem602}(Lemma 5.4 in \cite{HH}) Suppose that $u\in
C^2(R_+^{n})\bigcap L^p(R_+^{n})$ with $u_x,
yu_y\in L^p(R_+^{n})$ satisfies
\begin{equation}\begin{split}\label{611}
L(u)=yu_{yy}+\sum_{i,
j}a_{ij}u_{x_ix_j}+y\sum_ja_ju_{yx_j}+\sum_jb_ju_{x_j}+bu_y=f,
\text{in} \   R_+^{n}, \end{split}
\end{equation}where $a_{ij}, a_j, b_j, b$ are all in
$C(\overline{R_+^{n}})$ with $a_{ij}(0)=\delta_{ij},
b(0)>\frac 32$, $f\in L^\infty(R^{n}_+)$ and that for some
$\epsilon>0$,
\begin{equation}\label{612}
\lim_{y\rightarrow 0}y^{b(0)-1-\epsilon}u(x, y)=0\text { uniformly
for all }x\in R^{n-1}.
\end{equation}
Then for sufficiently large $p$, there are $r=r(p)>0$ such that
\begin{equation} I_p(\psi_ru)\leq C_r,
\end{equation}for some constant depending only on
$p, \|\psi_{2r}f\|_{L^p}, \|\psi_{2r}u\|_{L^p},
\|\psi_{2r}u_x\|_{L^p}$ and $\|y\psi_{2r}u_y\|_{L^p}$ provided that
$p>n$ or $p>\frac {n}2 $ and $b(0)-2-\epsilon>0$.
\end{lemma}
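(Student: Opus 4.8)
The plan is to run the standard perturbation scheme for degenerate estimates of this kind. Place the boundary point at the origin, replace $L$ by the constant–coefficient model operator $L_0 v:=y v_{yy}+\Delta_x v+b(0)\,v_y$, whose inverse is exactly the operator $K$ of \eqref{d009} with parameter $a=b(0)>\tfrac 32$, and treat $L-L_0$ together with the commutators coming from the cut-off $\psi_r$ as a perturbation which is small once the localization radius $r$ is chosen small; then close the estimate by absorption, using the $I_p$-bound \eqref{k01} for $K$.

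\textbf{Localization.} Set $v=\psi_r u$. Expanding $L(\psi_r u)$ gives $L(v)=\psi_r f+\mathcal E$, where $\mathcal E$ is the sum of the commutator terms in which $L$ falls on $\psi_r$; each of these is a bounded function (the bound depending on $r$) times one of $u$, $u_x$, $yu_y$, and is supported where $\nabla\psi_r\neq 0$, a region on which $\psi_{2r}\equiv 1$ — the factor $y$ in $y\sum_j a_j u_{yx_j}$ being precisely what converts $u_y$ into $yu_y$ in the commutator. Hence $\|L(v)\|_{L^p}\le C_r\mathcal D$, where $\mathcal D:=\|\psi_{2r}f\|_{L^p}+\|\psi_{2r}u\|_{L^p}+\|\psi_{2r}u_x\|_{L^p}+\|y\psi_{2r}u_y\|_{L^p}$. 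Writing $L=L_0+\mathcal P$ with
$$\mathcal P(v)=\sum_{i,j}(a_{ij}-\delta_{ij})v_{x_ix_j}+y\sum_j a_j v_{yx_j}+\sum_j b_j v_{x_j}+(b-b(0))v_y ,$$
we obtain $L_0 v=\psi_r f+\mathcal E-\mathcal P(v)=:F$, with $v$ of compact support in $\overline{R^n_+}$.

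\textbf{Representation and absorption.} Granting that $F\in L^p$ (see below), \eqref{k01} yields $L_0(K(F))=F$, $K(F)\to 0$ at infinity, $I_p(K(F))\le C\|F\|_{L^p}$, and — for $p$ large, by the embedding properties of the $I_p$-norm proved in \cite{HH} — $K(F)$ is continuous up to $\{y=0\}$, hence satisfies a fortiori the boundary-decay condition \eqref{612}. Thus $w:=v-K(F)$ solves $L_0 w=0$ in $R^n_+$, decays at infinity, and satisfies \eqref{612} (inheriting it from $u$); by the uniqueness of such solutions for $L_0$ — this is exactly where the hypotheses ``$p>n$'', respectively ``$p>\tfrac n2$ with $b(0)-2-\epsilon>0$'', are used (to make $w$ regular/decaying enough), cf.\ \cite{HH} — one gets $w\equiv 0$, i.e.\ $v=K(\psi_r f+\mathcal E)-K(\mathcal P(v))$. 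Applying \eqref{k01} again,
$$I_p(v)\le C_r\mathcal D+C\|\mathcal P(v)\|_{L^p}.$$
For the last term I would use the $L^p$-boundedness of Riesz transforms in $x$, giving $\|v_{x_ix_j}\|_{L^p}\le C\|\Lambda_1^2 v\|_{L^p}$ and $\|y^{1/2}v_{yx_j}\|_{L^p}\le C\|y^{1/2}\Lambda_1 v_y\|_{L^p}$; the continuity of the coefficients with $a_{ij}(0)=\delta_{ij}$, so that $\|a_{ij}-\delta_{ij}\|_{L^\infty(\operatorname{supp} v)}+\|b-b(0)\|_{L^\infty(\operatorname{supp} v)}\le\omega(r)\to 0$ as $r\to 0$; the bound $y\le r$ on $\operatorname{supp} v$, which contributes a factor $r^{1/2}$ to the term $y\sum_j a_j v_{yx_j}$; and, for $\sum_j b_j v_{x_j}$ — the only term carrying no intrinsic smallness — the interpolation $\|\nabla_x v\|_{L^p}\le\eta\|\Lambda_1^2 v\|_{L^p}+C_\eta\|v\|_{L^p}$ (in the $x$-variables, with $y$ a parameter). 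Altogether $\|\mathcal P(v)\|_{L^p}\le\bigl(C\omega(r)+Cr^{1/2}+C\eta\bigr)I_p(v)+C_\eta\|v\|_{L^p}$. Choosing $\eta$ small and then $r=r(p)$ small so that $C(\omega(r)+r^{1/2}+\eta)\le\tfrac12$, one absorbs $I_p(v)$ into the left side and concludes $I_p(v)\le C_r\mathcal D$, which is the claim since $\|v\|_{L^p}\le\|\psi_{2r}u\|_{L^p}$.

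\textbf{The main obstacle.} The delicate point — which I expect to be the heart of the proof and which the absorption above tacitly presupposes — is the a priori finiteness of $I_p(v)$; equivalently, that $\mathcal P(v)\in L^p$, so that $F\in L^p$ and the representation $v=K(F)$ is legitimate. The hypotheses supply only $u$, $u_x$, $yu_y\in L^p$, not the second-order quantities, so this cannot be read off directly for a merely $C^2$ solution. The remedy is to carry the whole argument out first on a regularization — with coefficients and $f$ mollified, and/or with $v$ truncated away from $\{y=0\}$ by a factor $\zeta(y/\delta)$, the extra $\zeta$-commutator being forced to $0$ as $\delta\to 0$ precisely by the decay \eqref{612} — obtaining a bound uniform in the regularization parameters and then passing to the limit. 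Making this reduction rigorous, and verifying the decay of $w$ at $\{y=0\}$ and at infinity that the uniqueness step in the previous paragraph requires, is where the real work lies; the commutator bookkeeping and the perturbative estimates are otherwise routine.
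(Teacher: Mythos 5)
There is nothing in this paper to compare your proposal against: Lemma \ref{lem602} is quoted verbatim from \cite{HH} (Lemma 5.4 there) and is used here as an imported black box, so the paper contains no proof of it. Judged on its own terms, your plan is the natural (and almost certainly the intended) one, since the whole apparatus the paper imports from \cite{HH} --- the Bessel-type kernel $\hat K$, the solution operator $K$ of \eqref{d009}, the bound \eqref{k01}, the norms $I_p$, and the embedding Lemma \ref{lem604} --- exists precisely to run a freezing-of-coefficients argument: localize, invert the model operator $L_0v=yv_{yy}+\Delta_xv+b(0)v_y$ by $K$, and absorb $\mathcal P(v)$ for small $r$. Your commutator bookkeeping is right (every $u_y$ in $\mathcal E$ indeed appears with a factor $y$, so $\mathcal D$ involves only the four norms listed in the statement), and the absorption estimate for $\mathcal P(v)$ via $\omega(r)$, the factor $r^{1/2}$ from $y\le r$, Riesz transforms, and interpolation on $\sum_jb_jv_{x_j}$ is sound.

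Two steps remain sketches rather than proofs, and they are exactly where the content of the lemma lives. First, the a priori finiteness of $I_p(\psi_ru)$ (equivalently, the legitimacy of writing $v=K(F)$ and then absorbing): you correctly flag this, but ``regularize and pass to the limit'' is not yet an argument --- one must check that the truncation commutator $\zeta(y/\delta)$ produces terms like $\delta^{-1}y\,\zeta'(y/\delta)u_y$ and $\delta^{-2}y\,\zeta''(y/\delta)u$ whose $L^p$ norms tend to $0$, and it is here (not only in the uniqueness step) that the decay hypothesis \eqref{612} and the size of $b(0)$ are needed to kill the boundary contributions. Second, the uniqueness assertion ``$L_0w=0$, $w\to 0$ at infinity, $w$ satisfies \eqref{612} $\Rightarrow$ $w\equiv 0$'' is stated but not proved; it requires an actual Liouville-type argument for the model operator (Fourier transform in $x$ reduces $L_0w=0$ to the modified Bessel equation whose two solutions behave like $1$ and $y^{1-b(0)}$ as $y\to 0$, and \eqref{612} with $b(0)>\tfrac32$ rules out the singular branch while the decay at infinity rules out the growing one). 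You also need to verify, via Lemma \ref{lem604} with the stated restrictions on $p$, that $K(F)$ itself satisfies \eqref{612}, so that $w$ inherits it --- this is where ``$p>n$'' or ``$p>\tfrac n2$ and $b(0)-2-\epsilon>0$'' enter, as you guessed. With those two steps supplied, your argument would constitute a complete proof.
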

\begin{lemma}\label{lem603}(Lemma 5.5 in \cite{HH})
Suppose that $w, \partial_xw,y\partial_yw \in\dot
{C}^{\alpha}_{loc}(\overline{R_+^{n}})\cap C^2(R_+^{n})$ with
$\alpha\in R_{+}^1 \backslash Z$ and $w$ satisfies
(\ref{611}),where $a_{ij}, a_j, b_j, b, f$ are all in
$\dot{C}^\alpha_{loc}(\overline{R_+^{n}})$ with
$a_{ij}(0)=\delta_{ij}, b(0)>\frac 32$.  Then
\begin{equation}\label{613}
I_{\alpha}(\psi_rw)\le C,
\end{equation}
for some positive constants $r$ and $C$,  depending on  $\alpha,
|\psi_{2r}f|_{\alpha}, |\psi_{2r}w |_{\alpha}$,  $
|\psi_{2r}\partial_xw |_{\alpha}$ and
$|y\psi_{2r}\partial_yw|_{\alpha}$.
\end{lemma}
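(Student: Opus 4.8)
The plan is to run the classical frozen-coefficient perturbation scheme for Schauder estimates, with the Newtonian potential replaced by the solution operator $K$ of the model degenerate operator $L_0u=yu_{yy}+\Delta_xu+b(0)u_y$, i.e. the operator obtained from the kernel $\hat K$ of \eqref{d009} with parameter $a=b(0)$. The basic input, used exactly as the Schauder potential estimate is used in the non-degenerate theory, is the weighted--H\"older mapping property of $K$ from \cite{HH} (the H\"older analogue of \eqref{k01}): for $g$ with compact support in $\overline{R^n_+}$ one has $K(g)\to0$ at infinity and
\[
I_\alpha(K(g))\le C\,|g|_{\dot C^\alpha(\overline{R^n_+})},\qquad C=C(n,\alpha,b(0)),
\]
with $C$ independent of $g$ and of the size of its support.

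First I would localise. Set $v=\psi_rw$; then $v$ is supported in $\{|x|\le r,\ 0\le y\le r\}$ and, using $L(w)=f$ from \eqref{611}, it solves $L_0v=F$ with
\[
F=\psi_rf+\sum_{i,j}(\delta_{ij}-a_{ij})\,\psi_{2r}\,\partial_{x_ix_j}v-y\sum_j a_j\,\psi_{2r}\,\partial_{yx_j}v-\sum_j b_j\,\psi_{2r}\,\partial_{x_j}v+(b(0)-b)\,\psi_{2r}\,\partial_yv+[L,\psi_r]w,
\]
where $[L,\psi_r]w:=L(\psi_rw)-\psi_rL(w)$ is a finite sum of terms $c\,\partial^\gamma\psi_r\,\partial^\delta w$ with $c$ a coefficient of $L$ and $|\delta|\le1$, supported where $\nabla\psi_r\ne0$; on that set $y\sim r$, so $y\partial_y\psi_r=O(1)$ and every occurrence of $w_y$ comes multiplied by such a factor, producing the harmless $(\partial_y\psi_r)(y\,w_y)$ with $y\,w_y\in\dot C^\alpha_{loc}$. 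Since $v$ is bounded and $b(0)>\frac32$, it satisfies $\lim_{y\to0}y^{\,b(0)-1-\epsilon}v=0$, which is precisely the class in which $K$ inverts $L_0$; $K(F)$ lies in the same class, so $v-K(F)$ solves $L_0(\cdot)=0$ with that boundary behaviour and decay, whence $v\equiv K(F)$ by the uniqueness (Liouville) statement for $L_0$ --- no boundary condition on $y=0$ being available or needed, the Fichera feature recalled in Remark \ref{remark1}, and this is exactly where $b(0)>\frac32$ is used. Finiteness of $I_\alpha(\psi_rw)$, needed to legitimise the absorption below, is obtained beforehand from the $L^p$ bootstrap of Lemma \ref{lem602} and the weighted embedding of Lemma \ref{lem201} (or by first carrying out the whole argument with mollified coefficients and data and then removing the mollification).

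Now apply the bound for $K$ to $v=K(F)$, getting $I_\alpha(\psi_rw)=I_\alpha(v)\le C\,|F|_{\dot C^\alpha(\overline{R^n_+})}$, and estimate $|F|_{\dot C^\alpha}$ term by term; write $\mathcal D=|\psi_{2r}f|_\alpha+|\psi_{2r}w|_\alpha+|\psi_{2r}\partial_xw|_\alpha+|y\psi_{2r}\partial_yw|_\alpha$ for the data. The pieces $\psi_rf$ and $[L,\psi_r]w$ contribute $\le C_r\mathcal D$ (on $\mathrm{supp}\,\nabla\psi_r$ the singular combination $\frac1y(y\,w_y)$ never occurs). Each frozen-coefficient error term carries a factor that is small when $r$ is small: $\|\delta_{ij}-a_{ij}\|_{C^0(\mathrm{supp}\,\psi_{2r})}\le\omega(r)\to0$, because $a_{ij}(0)=\delta_{ij}$ and the coefficients are jointly continuous; and in $y\,a_j\,\psi_{2r}\,\partial_{yx_j}v$ one writes $y\partial_{yx_j}=y^{1/2}\,(y^{1/2}\partial_{x_j}\partial_y)$, controls $y^{1/2}\partial_{x_j}\partial_yv$ by the $y^{1/2}\Lambda_1v_y$-entry of $I_\alpha(v)$ through the ($\dot C^\alpha$-bounded) Riesz transforms in $x$, and absorbs the leftover $y^{1/2}\le(2r)^{1/2}$ --- legitimate since the $\dot C^\alpha$-norm is a supremum over $y$ of $x$-H\"older norms; the truly lower-order $b_j\,\psi_{2r}\,\partial_{x_j}v$ and $(b(0)-b)\,\psi_{2r}\,\partial_yv$ are handled the same way. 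By the $\dot C^\alpha$ Leibniz rule and interpolation on the ($r$-sized) support, each of these splits into a small multiple of $I_\alpha(v)$ plus a data term $C_r\mathcal D$. Collecting, $I_\alpha(v)\le\varpi(r)\,I_\alpha(v)+C_r\mathcal D$ with $\varpi(r)\to0$ and $\varpi$ independent of $v$; fixing $r$ so small that $\varpi(r)<\frac12$ and absorbing the first term gives $I_\alpha(\psi_rw)\le C$ with the asserted dependence.

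The hard part is not this bookkeeping but the two places where the characteristic degeneracy at $y=0$ is essential. First, one needs the precise weighted--H\"older estimate for $K$ matched to the $yu_{yy}$-degeneration, that is, the simultaneous control of the anisotropic triple $y\partial_{yy}v,\ \Lambda_1^2v,\ y^{1/2}\Lambda_1\partial_yv$ constituting $I_\alpha$; this is exactly the result imported from \cite{HH}, and without it the perturbation terms cannot be closed. Second, the representation $v=K(F)$ must be justified although no boundary condition is available on $y=0$: it is decisive that $b(0)>\frac32$, so that the automatic decay $y^{\,b(0)-1-\epsilon}v\to0$ shared by all $\dot C^\alpha\cap L^\infty$ solutions of $L_0(\cdot)=0$ already forces $v=K(F)$. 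The preliminary a priori finiteness of $I_\alpha(\psi_rw)$ is routine but unavoidable, and is dispatched either by Lemma \ref{lem602} or by the mollification device.
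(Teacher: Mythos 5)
The paper does not prove this lemma: it is imported verbatim from \cite{HH} (Lemma 5.5 there), so there is no in-paper argument to compare yours against line by line. Your reconstruction is nevertheless the standard — and, as far as the paper's description of \cite{HH} allows one to tell, the actual — route: the H\"older mapping property $I_\alpha(K(g))\le C|g|_{\dot C^\alpha}$ of the model solution operator (the H\"older counterpart of \eqref{k01}), combined with localisation, coefficient freezing, the representation $v=K(F)$ forced by $b(0)>\frac32$ with no boundary datum on $\{y=0\}$ (the Fichera feature), and absorption of the small frozen-coefficient errors. The points you single out as delicate are indeed the delicate ones: the anisotropic control of $y\partial_{yy}v$, $\Lambda_1^2v$, $y^{1/2}\Lambda_1\partial_yv$ needed to close the term $y\,a_j\partial_{yx_j}v$, the a priori finiteness of $I_\alpha(\psi_rw)$ before absorbing, and the fact that in the commutator $[L,\psi_r]w$ every occurrence of $w_y$ is paired with $y\partial_y\psi_r$ so that only $y\,w_y\in\dot C^\alpha_{loc}$ is ever used; the sketch is correct at the level of detail one can reasonably demand for a cited result.
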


Denote by $ W^{1,p}_{\alpha}(U)$ the completion of the space of all
the functions $u$ in $C^1(\bar U)$  under the norm
$$\Big(\int_{U}y^{p\alpha}|Du|^pdxdy+\int_{U}y^{p\alpha}|u|^pdxdy\Big)^{\frac 1{p}}.$$
Here we always assume $U\subset R_+^{n}$,  bounded and $\partial
U\cap \{y=0\}$ nonempty.
\begin{lemma}\label{lem604}(Lemma 8.3 in \cite{HH} Appendix B)
Let $U\in C^1$ be bounded domain and let $\alpha\in (0, 1)$,
$p>1/(1-\alpha)$. Then the following maps are continuous
\begin{eqnarray}
W^{1, p}_{\alpha}(U)&\hookrightarrow &C^{\beta}(\bar U)\text { where
}\beta=1-\alpha-\frac {n}p,  \text { if }
p>\frac {n}{(1-\alpha)},\label{A61}\\
W^{1, p}_{\alpha}(U)&\hookrightarrow &L^q(U)\text { where }q<\frac
{np}{n-(1-\alpha)p}, \text { if } p<\frac
{n}{(1-\alpha)}.\label{A71}
\end{eqnarray}
\end{lemma}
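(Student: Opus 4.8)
The plan is to reduce both weighted embeddings to sharp weighted bounds for the Riesz potential of order one — that is, to a Stein--Weiss-type fractional-integration inequality in which the weight is a power of the distance to the hyperplane $\{y=0\}$.

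\textbf{Step 1 (reduction and extension).} Since $W^{1,p}_\alpha(U)$ is by definition the completion of $C^1(\bar U)$ in its norm, it suffices to establish, for $u\in C^1(\bar U)$, the bound $\|u\|_{C^\beta(\bar U)}\le C\|u\|_{W^{1,p}_\alpha(U)}$ when $p>n/(1-\alpha)$, and $\|u\|_{L^q(U)}\le C\|u\|_{W^{1,p}_\alpha(U)}$ for each fixed $q<q^\ast:=\frac{np}{n-(1-\alpha)p}$ when $p<n/(1-\alpha)$; continuity then follows. Writing $p'=p/(p-1)$, I record the equivalences used repeatedly:
$$p>\tfrac{1}{1-\alpha}\iff \alpha p'<1\iff |y|^{-\alpha p'}\in L^1_{\mathrm{loc}}(R^n)\iff |y|^{p\alpha}\in A_p(R^n),$$
whence $\int_{B_r(P)}|y_Q|^{-\alpha p'}\,dQ\le C\,r^{\,n-\alpha p'}$ for every ball (the extremal case being $B_r(P)$ meeting $\{y=0\}$). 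Because $U$ is a bounded $C^1$ domain with $\partial U\cap\{y=0\}\neq\emptyset$, $u$ extends to some $\tilde u\in\mathrm{Lip}_c(R^n)$ with $\tilde u=u$ on $U$, $\operatorname{supp}\tilde u$ in a fixed ball, and $\||y|^{\alpha}\tilde u\|_{L^p(R^n)}+\||y|^{\alpha}D\tilde u\|_{L^p(R^n)}\le C\|u\|_{W^{1,p}_\alpha(U)}$: near the flat part of the boundary lying on $\{y=0\}$ one uses even reflection in $y$, which leaves the weighted norm unchanged because $|y|^{p\alpha}$ is even, and away from $\{y=0\}$ the weight is comparable to a constant and a Stein extension is used. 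For such $\tilde u$,
$$|\tilde u(P)|\le c_n\int_{R^n}\frac{|D\tilde u(Q)|}{|P-Q|^{n-1}}\,dQ=:c_n\,(\mathcal I g)(P),\qquad g:=|D\tilde u|,$$
and, refining, $|\tilde u(P)-\tilde u(P')|\le c_n\int_{R^n}\bigl||P-Q|^{1-n}-|P'-Q|^{1-n}\bigr|\,g(Q)\,dQ$.

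\textbf{Step 2 (weighted bounds for $\mathcal I$).} The core is to prove, for $g\ge 0$ supported in a fixed ball $B_0$: if $p>n/(1-\alpha)$, then $\mathcal I g\in C^\beta(\overline{B_0})$ with $\beta=1-\alpha-\tfrac np>0$ and $\|\mathcal I g\|_{C^\beta(\overline{B_0})}\le C\||y|^\alpha g\|_{L^p}$; and if $p<n/(1-\alpha)$, then $\|\mathcal I g\|_{L^q(B_0)}\le C\||y|^\alpha g\|_{L^p}$ for every $q<q^\ast$. These are the Stein--Weiss inequalities for $\mathcal I$ with a power of $\mathrm{dist}(\cdot,\{y=0\})$ as weight, and I would argue them as follows. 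For the Hölder bound, split $\mathcal I g(P)-\mathcal I g(P')$ at scale $r=|P-P'|$: on $B_{2r}(P)$ apply Hölder, where $\int_{B_{2r}(P)}|y_Q|^{-\alpha p'}|P-Q|^{-(n-1)p'}\,dQ$ is finite — the colliding singularities (Riesz kernel at $Q=P$, weight at $\{y=0\}$) remain integrable — exactly when $(n-1)p'+\alpha p'<n\iff p>n/(1-\alpha)$, and it equals $Cr^{p'\beta}$; on the complement use $\bigl||P-Q|^{1-n}-|P'-Q|^{1-n}\bigr|\le Cr|P-Q|^{-n}$, Hölder, and summation over dyadic annuli $2^kr\le|P-Q|<2^{k+1}r$, each contributing $\lesssim (2^kr)^{-np'}(2^kr)^{\,n-\alpha p'}$ with negative total exponent; both pieces are $\lesssim r^\beta\||y|^\alpha g\|_{L^p}$, and a parallel simpler estimate yields $\|\mathcal I g\|_{L^\infty}\le C\||y|^\alpha g\|_{L^p}$. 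For the $L^q$ bound, since the admissible range of $q$ is open it suffices to obtain the weak-type endpoint inequality $\|\mathcal I g\|_{L^{q^\ast,\infty}(B_0)}\le C\||y|^\alpha g\|_{L^p}$ — the weak Stein--Weiss inequality for a slab weight, obtained from a Calderón--Zygmund decomposition of $\{y=0\}$ into dyadic slabs together with the mass bound $\int_{B_r}|y|^{-\alpha p'}\lesssim r^{\,n-\alpha p'}$ above — and then to use $L^{q^\ast,\infty}(B_0)\hookrightarrow L^q(B_0)$ for $q<q^\ast$ on the bounded set $B_0$.

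\textbf{Step 3 (conclusion) and the main obstacle.} When $p>n/(1-\alpha)$, $\|u\|_{C^\beta(\bar U)}\le\|\tilde u\|_{L^\infty}+[\tilde u]_{C^\beta}\le c_n\bigl(\|\mathcal I g\|_{L^\infty}+[\mathcal I g]_{C^\beta}\bigr)\le C\|u\|_{W^{1,p}_\alpha(U)}$, giving \eqref{A61}; when $p<n/(1-\alpha)$, $\|u\|_{L^q(U)}=\|\tilde u\|_{L^q(U)}\le c_n\|\mathcal I g\|_{L^q(B_0)}\le C\|u\|_{W^{1,p}_\alpha(U)}$, giving \eqref{A71}; density of $C^1(\bar U)$ in $W^{1,p}_\alpha(U)$ extends both continuously to the whole space. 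The genuinely hard point is the $L^q$ estimate up to the sharp exponent $q^\ast$, and I expect it cannot be shortcut through a Hardy--Littlewood maximal bound: the Hedberg splitting $\mathcal I g\le C\rho\,Mg+C\rho^{\beta}\||y|^\alpha g\|_{L^p}$ (all $\rho>0$, now $\beta<0$) optimises to $\mathcal I g\le C(Mg)^{1-\mu}\||y|^\alpha g\|_{L^p}^{\mu}$ with $\mu=\tfrac{p}{n+\alpha p}$, and concluding would require $M\colon L^p(|y|^{p\alpha}dx)\to L^{s}(dx)$ with $s=(1-\mu)q^\ast=\tfrac{np}{n+\alpha p}$, which is false on $U$ for that $s$ — the sharp range being only $s<\tfrac{p}{p\alpha+1}<\tfrac{np}{n+\alpha p}$, because $\int_U|y|^{-n}\,dx=\infty$; likewise the ``raise the dimension to $n+p\alpha$ and apply the usual weighted Sobolev inequality'' route gives only weighted control $u\in L^{Q}(|y|^{p\alpha}dx)$ with $Q=\tfrac{(n+p\alpha)p}{n-(1-\alpha)p}$, and removing the weight by Hölder stalls strictly below $q^\ast$ for the same reason. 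Hence the full Stein--Weiss argument, with the explicit Riesz kernel and the slab structure of the weight, is needed, and the hypothesis $p>1/(1-\alpha)$ is exactly the admissibility condition that makes it go through; the Hölder case, by contrast, is a routine Morrey-type estimate.
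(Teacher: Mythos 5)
First, a point of comparison: the paper does not prove this lemma at all — it is quoted verbatim from Appendix B of \cite{HH} — so there is no in-paper argument to measure yours against. Judged on its own terms, your proposal splits cleanly in two. The H\"older embedding \eqref{A61} is essentially correct and complete: the reduction to the Riesz potential of $|D\tilde u|$, the Morrey-type splitting at scale $r=|P-P'|$, and the key computation that $\int_{B_{2r}(P)}|y_Q|^{-\alpha p'}|P-Q|^{-(n-1)p'}\,dQ\lesssim r^{p'\beta}$ precisely when $p>n/(1-\alpha)$ all check out (in the worst case $P\in\{y=0\}$ the angular integral $\int_{S^{n-1}}|\omega_n|^{-\alpha p'}d\omega$ is finite because $(n-1)p'+\alpha p'<n$ forces $\alpha p'<1$), and the dyadic far-field sum converges since $n/p+\alpha>0$. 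The only loose end there is the extension step: for a general bounded $C^1$ domain $U\subset R^n_+$ whose boundary merely touches $\{y=0\}$ there is no flat face to reflect across, so you should either restrict to the domains actually used in the paper (boxes and half-balls with a face on $\{y=0\}$) or avoid extension altogether via the representation $|u(P)-u_U|\le C\int_U|Du(Q)|\,|P-Q|^{1-n}dQ$ on a convex $U$.

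The genuine gap is in \eqref{A71}. You reduce it to the weak-type endpoint bound $\|\mathcal Ig\|_{L^{q^*,\infty}}\le C\||y|^\alpha g\|_{L^p}$ — a Stein--Weiss inequality whose weight is a power of the distance to a hyperplane rather than to a point — and you justify it only by the phrase ``Calder\'on--Zygmund decomposition of $\{y=0\}$ into dyadic slabs together with the mass bound.'' That is a name for the difficulty, not an argument. Your Step 3 correctly explains why the two standard shortcuts (Hedberg's pointwise bound plus maximal-function boundedness, and the raise-the-dimension trick followed by H\"older to remove the weight) both stall at an exponent strictly below $q^*$ when $n\ge2$; having eliminated them, you owe the reader the replacement. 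Concretely: writing $g=\sum_j g_j$ with $g_j$ supported in $\{2^{-j-1}<y\le 2^{-j}\}$, the unweighted Hardy--Littlewood--Sobolev bound on each piece costs a factor $2^{j\alpha}$, and the resulting series does not close by H\"older alone — one must exploit the decay of $\mathcal Ig_j$ away from the $j$-th slab, or equivalently run a genuine two-weight Schur/Sawyer--Wheeden test on the kernel $|P-Q|^{1-n}|y_Q|^{-\alpha}$, to make the pieces almost orthogonal in $L^q$. Until that is written out, \eqref{A71} is asserted rather than proved; the honest course is either to supply that computation or to cite Lemma 8.3 of \cite{HH} as the paper does.
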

Denote by $\overline{W}^{2,p}(\overline{R^n_+})$ the completion of the space of all the functions $u\in C_c^\infty(\overline{R^n_+})$ under the norm $I_p(u)$.

\begin{lemma}\label{lem301}
If $u\in L^\infty_{loc}(\overline{R^n_+})$ is a nonnegative solution of \eqref{105} with $a>\frac 32,\alpha\geq 1$, we must have $u\in \overline W^{2,2}_{loc}(\overline{R^n_+})$.
\end{lemma}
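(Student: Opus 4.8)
The plan is to treat \eqref{105} as the linear degenerate equation $L_0u:=yu_{yy}+au_y+\Delta_xu=-u^\alpha$ with right-hand side $f:=-u^\alpha\in L^\infty_{loc}(\overline{R^n_+})$ — here $t\mapsto t^\alpha$ is locally Lipschitz (indeed $C^1$) on $[0,\infty)$ because $\alpha\ge1$, and $0\le u\in L^\infty_{loc}$ — and to establish the weighted a priori bound $I_2(\psi_ru)<\infty$ for cutoffs $\psi_r$ localizing near an arbitrary point of $\{y=0\}$. The interior is immediate: for $y$ bounded away from $0$ the operator $L_0$ is uniformly elliptic with smooth coefficients, so $L^p$-theory gives $u\in W^{2,p}_{loc}(R^n_+)$ for every $p<\infty$, hence $u\in C^{1,\gamma}_{loc}(R^n_+)$, whence $u^\alpha\in C^\gamma_{loc}$ and Schauder yields $u\in C^{2,\gamma}_{loc}(R^n_+)$; in particular $\psi u\in\overline W^{2,2}(\overline{R^n_+})$ whenever $\mathrm{supp}\,\psi\subset\subset R^n_+$. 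After a translation it remains to treat $\psi=\psi_r$, $r$ small, centered on $\{y=0\}$.

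\textbf{Weighted energy estimate.} First I would multiply \eqref{105} by $\eta_\epsilon\psi_{2r}^2u$, with $\eta_\epsilon=\eta_\epsilon(y)$ the cutoff in $y$ from the proof of Lemma \ref{lem204}, integrate over $R^n_+$ and integrate by parts. The principal part produces $c_0\int\eta_\epsilon\psi_{2r}^2(yu_y^2+|\nabla_xu|^2)$ up to terms absorbed or bounded using $a>\tfrac12$, $\sup_{\mathrm{supp}\,\psi_{2r}}(|u|+|u^\alpha|)<\infty$ and $|\nabla\psi_{2r}|\le C$; the contributions of $\eta_\epsilon'$, $\eta_\epsilon''$ are harmless since the coefficient of $u_{yy}$ is exactly $y$ and $\int_\epsilon^{2\epsilon}y\,dy\le C\epsilon^2$ defeats $|\eta_\epsilon''|\le C\epsilon^{-2}$, precisely as in Lemma \ref{lem204}. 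Letting $\epsilon\to0$ gives $\int\psi_{2r}^2(yu_y^2+|\nabla_xu|^2)\le C$, so $u\in\widetilde W^{1,2}$ on a boundary neighborhood; in particular $\nabla_xu$ and $y^{1/2}u_y$ (hence $yu_y$) lie in $L^2(\mathrm{supp}\,\psi_{2r})$. Moreover $y^{a-1-\epsilon}u\to0$ as $y\to0$ since $u$ is locally bounded and $a>\tfrac32$, so the decay condition \eqref{612} holds.

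\textbf{Representation by the kernel and identification.} A direct computation gives $L_0(\psi_ru)=F$ with
\[
F=-\psi_ru^\alpha+2y(\partial_y\psi_r)u_y+2\nabla_x\psi_r\cdot\nabla_xu+\big(y\partial_{yy}\psi_r+a\partial_y\psi_r+\Delta_x\psi_r\big)u .
\]
By the previous step every term of $F$ is in $L^2(R^n_+)$ with compact support in $\overline{R^n_+}$, so \eqref{k01} applies with $p=2$: $I_2(K(F))\le C\|F\|_{L^2}<\infty$, $K(F)\to0$ at infinity, $K(F)\in\overline W^{2,2}(\overline{R^n_+})$, and $K(F)$ solves $L_0v=F$ (the content of the kernel construction in \cite{HH}). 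Also $I_2(K(F))<\infty$ forces $K(F)\in\widetilde W^{1,2}(R^n_+)$, since it controls $\|K(F)\|_{L^2}$, $\|(K(F))_y\|_{L^2}$, and, interpolating $\|\Lambda_1^2K(F)\|_{L^2}$ with $\|K(F)\|_{L^2}$, also $\|\nabla_xK(F)\|_{L^2}$. Then $w:=\psi_ru-K(F)$ lies in $\widetilde W^{1,2}(R^n_+)\cap C^2(R^n_+)$ and solves $L_0w=0$ in $R^n_+$. Testing $L_0w=0$ against $w$, cut off by $\eta_\epsilon$ in $y$ and by a large-ball cutoff in $x$, and passing to the limit — removing the ball cutoff by $L^2$-integrability and killing the term $\epsilon\int_{\{y=\epsilon\}}ww_y$ along a sequence $\epsilon_k\to0$ with $\epsilon_k\int_{\{y=\epsilon_k\}}w_y^2\to0$, exactly as in Lemma \ref{lem202} — I would obtain
\[
\int_{R^n_+}yw_y^2+\int_{R^n_+}|\nabla_xw|^2+\frac{a-1}{2}\int_{\{y=0\}}w^2=0 ;
\]
no boundary condition at $\{y=0\}$ is needed because the Fichera number of $L_0$ equals $a-\tfrac12>0$. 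Since $a>1$, each term vanishes, so $w$ is constant; being in $L^2(R^n_+)$, $w\equiv0$. Hence $\psi_ru=K(F)\in\overline W^{2,2}(\overline{R^n_+})$, which together with the interior regularity gives $u\in\overline W^{2,2}_{loc}(\overline{R^n_+})$.

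I expect the delicate point to be the identification $\psi_ru=K(F)$: since $\psi_ru$ is not yet known to be $C^2$ up to $\{y=0\}$, the energy identity above must be derived through the $\eta_\epsilon$-regularization together with the sign furnished by the positive Fichera number, and one must verify that every cutoff error genuinely vanishes in the limit. By contrast, the energy estimate of the second step and the kernel input \eqref{k01} should be routine once Lemma \ref{lem204} and the quoted results of \cite{HH} are in hand.
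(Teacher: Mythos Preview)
Your proposal is correct and follows essentially the same route as the paper: first obtain $u\in\widetilde W^{1,2}_{loc}$ by the $\eta_\epsilon$-argument of Lemma \ref{lem204}, then set $w=\psi_r u-K(F)$ with $F=L_0(\psi_r u)\in L^2$ compactly supported, and show $w\equiv 0$ by testing $L_0w=0$ against $w$ with a large-ball cutoff $\psi_R$, killing the boundary term $\epsilon\!\int_{\{y=\epsilon\}}\psi_R^2 w\,w_y$ along a subsequence and sending $R\to\infty$ using $w\in L^2(R^n_+)$. The only cosmetic differences are that the paper drops the nonnegative term $\tfrac{a-1}{2}\!\int_{\{y=\epsilon\}}\psi_R^2 w^2$ rather than carrying it to the limit, and concludes $w=0$ from the compact support of $\psi_r u$ together with $K(F)\to0$ at infinity rather than from $w\in L^2$; both variants give the same conclusion.
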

\begin{proof}
By the similar arguments as we did in Lemma \ref{lem204}, one can see that  $u\in \widetilde W^{1,2}_{loc}(\overline{R^n_+})$. Let $\psi(x,y)\in C_c^\infty(\overline{R^n_+})$ be a cutoff function and $\psi_r(x,y)=\psi(\frac x{\sqrt r},\frac yr)$. Set $u_r=\psi_r u$. Then $u_r$ satisfies that \begin{eqnarray}\label{301}
&&y\partial_{yy}u_r+a\partial_y u_r+\Delta_x u_r\\&=&2y\partial_y\psi_r\partial_yu+a\partial_y\psi_r u+2\nabla_x\psi_r\nabla_x u+y\partial_{yy}\psi_r u+\Delta_x \psi_r u-\psi_r u^\alpha=f
\end{eqnarray}Set $v=K(f)$ where $K(f)$ is defined in \eqref{d009}. As $f\in L^2(R^n_+)$ with compact support, it is valid that  $I_2(v)\leq C$ by \eqref{k01}. It follows that for $w=u_r-v$, there holds \begin{equation}
y\partial^2_y w+a\partial_y w+\Delta_x w=0 \text{ in } R^n_+
\end{equation}Multiplying the above equation by $-\psi_R^2w$ and integrating by parts, one can see\begin{eqnarray}
\label{302}&&\int_{R_x^{n-1}\times\{y\geq \epsilon\}}\left(y\psi_R^2(\partial_y w)^2+\psi_R^2|\nabla_x w|^2\right)+\frac{a-1}{2}\int_{R_x^{n-1}\times \{y=\epsilon\}} \psi_R^2w^2\nonumber\\=&&-\int_{R_x^{n-1}\times\{y=\epsilon\}}\epsilon\psi_R^2
w\partial_yw-2\int_{R_x^{n-1}\times\{y\geq\epsilon\}}y\psi_Rw\partial_y\psi_R   \partial_y w\nonumber\\&&-(a-1)\int_{R_x^{n-1}\times\{y\geq\epsilon\}}\partial_y\psi_R\psi_R
w^2-2\int_{R_x^{n-1}\times\{y\geq\epsilon\}}\nabla_x\psi_R\cdot\nabla_x w\psi_R w\nonumber\\ \leq &&-\int_{R_x^{n-1}\times\{y=\epsilon\}}\epsilon\psi_R^2
w\partial_yw+\frac 12\int_{R_x^{n-1}\times\{y\geq\epsilon\}}y\psi_R^2(\partial_y w)^2\nonumber\\&&+\frac 12\int_{R_x^{n-1}\times\{y\geq\epsilon\}}\psi_R^2|\nabla_x w|^2+\frac CR\int_{R^n_+} w^2
\end{eqnarray}
The remaining thing is to prove \begin{equation}\label{303}\underset{\epsilon\rightarrow 0}{\underline{\lim}}\int_{R_x^{n-1}\times\{y=\epsilon\}}\epsilon\psi_R^2
w\partial_yw=0\end{equation} Without loss of generality, it can be assumed that $\|v(x,0)\|_{L^2(R^{n-1}_x)}\leq c$. Hence, \begin{eqnarray}
\label{304} && v(x,y)=\int_0^y\partial_y v dt +v(x,0)\nonumber\\
\Rightarrow &&\|v(x,\epsilon)\|_{L^2(R^{n-1}_x)}\leq c\sqrt\epsilon\|v_y\|_{L^2(R^n_+)}+2\|v(x,0)\|_{L^2(R^{n-1}_x)}
\end{eqnarray} Noting $|u_r|_\infty\leq c$, one can get \begin{equation}\label{305}\int_{R_x^{n-1}\times\{y=\epsilon\}}\psi_R^2
w^2\leq c\text{ independent of }\epsilon\end{equation} Also, as $y^\frac 12\partial_y w \in L^2(R_x^{n-1}\times(0,1))$, it follows from the same arguments in Lemma \ref{201} that \begin{equation}\label{306}
\underset{\epsilon\rightarrow 0}{\underline{\lim}}\int_{R_x^{n-1}\times\{y=\epsilon\}}\epsilon^2\psi_R^2
|\partial_yw|^2=0
\end{equation}Combining \eqref{305} with \eqref{306}, one can see \eqref{303} is valid. Choosing a suitable subsequence of $\epsilon_k\rightarrow 0$ in \eqref{302} yields that \begin{equation}\label{307}
\int_{R^{n}}y\psi_R^2(\partial_y w)^2+\psi_R^2|\nabla_x w|^2 \leq \frac CR\int_{R^n_+} w^2
\end{equation}Passing $R\rightarrow \infty$ in \eqref{307}, one can get $D_{x,y}w\equiv 0$ which means $w\equiv $const. From the compact support of $u_r$ and the asymptotic property of $v$, $w$ must vanish in $R^n_+$. This completes the proof of the present lemma.
\end{proof}

With the aid of Lemma \ref{lem301}, we can now prove Theorem \ref{thm103}.

The proof for Theorem \ref{thm103}:
by Lemma \ref{lem301}, Lemma \ref{lem201} and Sobolev embedding theorem, one can get \begin{equation}
y\partial_y u_r\in H^1(R_+^n),\nabla_x u_r\in \widetilde W^{1,2}(R^n_+)\Rightarrow y\partial_y u_r,\nabla_x u_r\in L^\theta(R^n_+),\theta=\frac{2(n+1)}{n-1}
\end{equation}Set $v=K(f)$. It follows from \eqref{k01} that $v\in \overline W^{2,2}(R^n_+)\cap \overline W^{2,\theta}(R^n_+)$. By the same arguments as in Lemma \ref{lem301}, we can get $u_r=v\in \overline W^{2,2}(R^n_+)\cap \overline W^{2,\theta}(R^n_+)$ with $\nabla_x u_r\in W_{\frac 12}^{1,\theta}(R^n_+)$. Again, by Lemma \ref{lem604} and Sobolev embedding theorem, it yields that $$y\partial_y u_r,\nabla_x u_r\in L^{\frac{n\theta}{n-\frac \theta2}}(R^n_+),\partial_y u_r\in L^{\theta}(R^n_+)
$$Set $\theta_k=\frac{n\theta_{k-1}}{n-\frac{\theta_{k-1}}{2}}, \theta_0=\theta$. Repeating the above arguments, one can get $$
y\partial_y u_r,\nabla_x u_r\in L^{\theta_k}(R^n_+),\partial_y u_r\in L^{\theta_{k-1}}(R^n_+),k=1,2...$$ Choosing $k$ so large such that $ \theta_k>2n$, we must have $$v\in \overline W^{2,2}(R^n_+)\cap \overline W^{2,\theta_k}(R^n_+),y\partial_y u_r,\nabla_x u_r,u_r\in C^\beta(\overline{R^n_+}),\gamma=\frac 12-\frac{n}{\theta_k}$$
Now we can apply Lemma \ref{lem603} to $u_r$, we can get $I_{\gamma}(u_r)\leq C$. This means $$y\partial_y^2 u_r,\Lambda_1^2 u_r,y^{\frac 12}\partial_y\Lambda_1 u_r,\partial_y u_r\in \dot{C}^{\gamma}(\overline{R^n_+}).$$ Applying Lemma \ref{lem603} to $\Lambda_1 u_r$ yields that $I_{\gamma}(\Lambda_1 u_r)\leq C$, or$$
y\partial^2_y \Lambda_1 u_r,\Lambda_1^3 u_r,\partial_y\Lambda_1 u_r,y^{\frac 12}\partial_y\Lambda_1^2 u_r\in \dot{C}^\gamma(\overline{R^n_+}).
$$Applying Lemma \ref{lem603} to $\partial_y u_r$ again, one can get$$
y\partial^3_y u_r,\Lambda^2_1\partial_y u_r,\partial_y^2 u_r,y^{\frac 12}\partial_y^2\Lambda_1 u_r\in \dot{C}^\gamma(\overline{R^n_+}).
$$

This implies that $u_r\in C^2(\overline{R^n_+})$.   Now we have finished the proof of Theorem \ref{thm103}.\begin{remark}As we already know $u\in C^2(\overline{R^n_+})$, by the maximum principle in \cite{HU}, we have $u>0\in C^2(\overline{R^n_+})$.
Proceeding the above arguments and by induction, we can get $u\in C^\infty(\overline{R^n_+})$.
\end{remark}

\section{The proof of  a priori bounds}
\setcounter{section}{4} \setcounter{equation}{0}
\setcounter{theorem}{0}\setcounter{lemma}{0}
Before proving the main theorem, we introduce two Liouville type lemmas concerning the semi-linear elliptic equations. \begin{lemma}\label{lem401}
Let $u\geq 0$ be a $C^2$ solution of \begin{equation*}
\Delta u +u^\alpha=0 \text{ in } R^n.
\end{equation*}Then $u\equiv 0$, for $1\leq\alpha<\frac{n+2}{n-2}$.
\end{lemma}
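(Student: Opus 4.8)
\textbf{Proof plan for Lemma~\ref{lem401}.} This is the classical Gidas--Spruck Liouville theorem for the Lane--Emden equation in the subcritical range, so the plan is to reproduce the moving-planes-free integral argument based on a Pohozaev-type identity combined with a Bochner/Rellich-type estimate, or, more efficiently, the method of test functions with a nonlinear feedback on the energy. I would take the latter route. First I would fix a cutoff $\varphi\in C_c^\infty(R^n)$ with $\varphi\equiv 1$ on $B_R$, $\mathrm{supp}\,\varphi\subset B_{2R}$, $|\nabla\varphi|\le C/R$, $|\Delta\varphi|\le C/R^2$, and use $\varphi^m u^{p}$ (for suitable exponents $m$ large and $p$ to be chosen in a range dictated by $\alpha$) as a test function in the weak formulation of $\Delta u + u^\alpha = 0$. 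Integration by parts converts $\int \varphi^m u^p \Delta u$ into a good negative term $-\,c\int \varphi^m u^{p-1}|\nabla u|^2$ plus error terms controlled by $\int u^{p+1}(|\nabla\varphi|^2+\varphi|\Delta\varphi|)\varphi^{m-2}$, while the nonlinear term contributes $\int \varphi^m u^{p+\alpha}$. Applying Young's inequality to the error term against $\int_{B_{2R}} u^{p+\alpha}\varphi^m$ with the leftover power absorbed into a $\int_{B_{2R}\setminus B_R} u^{p+\alpha}$ factor raised to a fraction, and using the subcriticality $\alpha<\frac{n+2}{n-2}$ to guarantee the resulting exponent balance closes, one obtains
\begin{equation*}
\int_{B_R} u^{p+\alpha}\,dx \le C\,R^{\,n-\frac{2(p+\alpha)}{\alpha-1}}\Big(\int_{B_{2R}} u^{p+\alpha}\,dx\Big)^{\theta}
\end{equation*}
for some $\theta<1$ and an exponent on $R$ that is strictly negative for $p$ chosen appropriately in the subcritical window.

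From this inequality there are two standard ways to conclude, and I would present whichever is cleaner. If $u\in L^{p+\alpha}(R^n)$ can be shown a priori (which follows by first running the estimate with $\theta=1$ scaling to get a uniform bound $\int_{B_R}u^{p+\alpha}\le CR^{n-\frac{2(p+\alpha)}{\alpha-1}}$ with negative exponent, hence letting $R\to\infty$ gives $\int_{R^n}u^{p+\alpha}<\infty$ or even directly $=0$), then sending $R\to\infty$ in the feedback inequality forces $\int_{R^n}u^{p+\alpha}=0$, so $u\equiv 0$. The borderline exponent requires care: exactly at $\alpha=\frac{n+2}{n-2}$ the power of $R$ vanishes and the argument only yields finite energy, not triviality, which is why strict subcriticality is essential; I would point out where the strict inequality enters (namely in making $n-\frac{2(p+\alpha)}{\alpha-1}<0$ achievable simultaneously with $\theta<1$). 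For $\alpha=1$ the equation is linear, $\Delta u = -u$, and a nonnegative solution on all of $R^n$ is handled by the classical fact that nonnegative solutions of $\Delta u + u = 0$ cannot exist unless $u\equiv 0$ (e.g. by testing against the first Dirichlet eigenfunction on a large ball, or noting the equation has no bounded-below global solution); I would treat this degenerate endpoint separately in a sentence.

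The main obstacle is the exponent bookkeeping: one must choose the test-function power $p$ and the cutoff power $m$ so that (i) the integration-by-parts error terms are genuinely lower order, (ii) Young's inequality produces a self-improving inequality with $\theta<1$, and (iii) the scaling exponent of $R$ comes out negative — all three simultaneously, and this is precisely where $1\le\alpha<\frac{n+2}{n-2}$ is used (for $n\le 2$ the upper bound is vacuous/infinite and any $\alpha\ge 1$ works, which should be noted). Since this lemma is invoked only as an input to the blow-up analysis in Theorem~\ref{thm101}, and it is a well-known result, I would either cite Gidas--Spruck directly and give the short test-function proof sketch above, or simply reference \cite{GS2} and the standard literature; the self-contained proof via the nonlinear integral estimate is the honest option and is what I would write out in full, being careful with the constants only at the points listed.
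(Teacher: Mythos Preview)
The paper does not prove this lemma at all: immediately after stating it, the paper writes ``For the proof of Lemma~\ref{lem401}, we refer to \cite{GS} and \cite{CGS}.'' So your option of simply citing the literature is exactly what the paper does; note, however, that the correct reference in the paper's bibliography is \cite{GS} (Gidas--Spruck, \emph{Global and local behavior of positive solutions of nonlinear elliptic equations}, CPAM 1981), not \cite{GS2}, which is the companion a~priori-bounds paper.

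Your proposed self-contained argument goes well beyond what the paper supplies, which is fine, but be careful: the plain test-function/rescaled-cutoff method you sketch (multiply by $\varphi^m u^p$, integrate by parts, close with Young/H\"older and let $R\to\infty$) in its standard form yields nonexistence only up to the Serrin exponent $\alpha<\frac{n}{n-2}$, because the scaling exponent $n-\frac{2(p+\alpha)}{\alpha-1}$ together with the H\"older bookkeeping does not close in the window $\frac{n}{n-2}\le\alpha<\frac{n+2}{n-2}$. Reaching the full subcritical Sobolev range requires an extra ingredient --- either the original Gidas--Spruck integral identities (which combine the equation with a Bochner-type identity for $|\nabla u|^2$ and are substantially more delicate than a single test-function step), or the Chen--Li moving-planes proof, or the Caffarelli--Gidas--Spruck asymptotic analysis. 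Your first sentence does mention the Pohozaev/Bochner route, which is the right idea, but the detailed plan you then lay out is the simpler capacity argument and would stall before $\frac{n+2}{n-2}$. If you want a self-contained proof, follow \cite{GS} or Chen--Li rather than the feedback inequality; otherwise, citing \cite{GS} and \cite{CGS} as the paper does is entirely adequate for the role this lemma plays in the blow-up argument of Theorem~\ref{thm101}.
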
For the proof of Lemma \ref{lem401}, we refer to \cite{GS} and \cite{CGS}. Another Lemma concerns the degenerate case. \begin{lemma}
\label{lem402}Let $u(x,y)\geq 0$ be a $C^2(\overline{R^n_+})$
solution of
\begin{equation*}  yu_{yy}+au_y+\Delta u+u^\alpha=0\text{ in }R^n_+\end{equation*} with $a>1$. Then $u\equiv 0$ for $1\leq \alpha<\frac{n+2a+1}{n+2a-3}$.
\end{lemma}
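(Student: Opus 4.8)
\textbf{Proof proposal for Lemma \ref{lem402}.}

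The plan is to reduce the degenerate Liouville statement to the classical non-degenerate one (Lemma \ref{lem401}) by a dimension-lifting substitution of the same flavour as the one used in Lemma \ref{lem201}. Concretely, set $y=\tfrac14 r^2$ (or $r=2\sqrt y$), so that $y\partial_{yy}+a\partial_y$ becomes, after the change of variables, the radial part of a Laplacian in a fractional dimension $N=2a$: one computes $y u_{yy}+a u_y = \partial_{rr}v+\tfrac{2a-1}{r}\partial_r v$ when $v(x,r)=u(x,y)$. Hence $u$ solves the degenerate equation on $R^n_+$ if and only if $v$ solves $\Delta_x v+\partial_{rr}v+\tfrac{2a-1}{r}\partial_r v+v^\alpha=0$ on $R^{n-1}\times(0,\infty)$, which is exactly the equation satisfied by a cylindrically (in the last $2a$ variables) symmetric solution of $\Delta_z V+V^\alpha=0$ in dimension $n-1+2a$. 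Since by Theorem \ref{thm103} (using $a>3/2$, and the hypothesis here is $a>1$; if one only has $a>1$ one must instead invoke the $C^2(\overline{R^n_+})$ regularity that is part of the hypothesis of this lemma) the solution $u$, and therefore $v$, is $C^2$ up to $\{r=0\}$ with $\partial_r v=0$ there, the extension $V(z)=v(x,|w|)$ with $z=(x,w)\in R^{n-1}\times R^{2a}$ is a genuine $C^2$ solution of $\Delta V+V^\alpha=0$ on all of $R^{n-1+2a}$ when $2a$ is an integer; for non-integer $2a$ one runs the Gidas–Spruck / Caffarelli–Gidas–Spruck integral-estimate machinery directly on the weighted equation for $v$, with weight $r^{2a-1}$, which behaves for every purpose like the volume element of dimension $n-1+2a$.

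The key steps in order: (i) perform the substitution $r=2\sqrt y$ and verify the radial-Laplacian identity, recording that the weight $r^{2a-1}dr\,dx$ is the natural measure; (ii) use the $C^2(\overline{R^n_+})$ hypothesis to get that $v$ is even and smooth across $r=0$, so all the boundary terms in the integral identities below vanish; (iii) carry out the Pohozaev/Rellich multiplier argument and the Gidas–Spruck iteration (multiply by $v^s\zeta^2$ for suitable powers $s$ and cutoffs $\zeta$, integrate against $r^{2a-1}$, and bootstrap $L^q$ bounds) exactly as in \cite{GS},\cite{CGS}, with the effective dimension $N=n-1+2a$ in place of $n$; (iv) conclude that $v\equiv 0$ provided the subcritical condition $1\le\alpha<\frac{N+2}{N-2}=\frac{n+2a+1}{n+2a-3}$ holds, which is precisely the stated range; (v) translate back to $u\equiv 0$. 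Alternatively, in the case where $2a\in\mathbb{Z}$ one can skip (iii)–(iv) entirely and simply quote Lemma \ref{lem401} in dimension $n-1+2a$ applied to $V$.

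The main obstacle is step (ii)–(iii): justifying that the cylindrical extension (or the weighted integral estimates) really sees the critical exponent of dimension $n-1+2a$ and that no boundary contribution at $\{r=0\}$ survives. This is where the regularity results of Section 3 are essential — one needs $v,\partial_r v,\partial_{rr}v$ continuous up to $r=0$ with $\partial_r v|_{r=0}=0$ so that the flux terms $\int r^{2a-1}\partial_r v\,(\cdots)$ and $\int r^{2a-1} v\,\partial_r v\,(\cdots)$ on $\{r=\epsilon\}$ tend to $0$; without that, the $r^{2a-1}$ weight degenerates and the integration by parts is not licit. Once the boundary terms are controlled, the remainder of the argument is the standard non-degenerate Liouville proof transplanted verbatim, and the arithmetic $\frac{(n-1+2a)+2}{(n-1+2a)-2}=\frac{n+2a+1}{n+2a-3}$ matches the hypothesis exactly, which is the consistency check that the reduction is the right one.
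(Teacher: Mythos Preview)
The paper does not actually prove Lemma \ref{lem402}; immediately after the statement it writes ``This lemma was proved in author's another paper \cite{HU}'' and moves on. So there is no in-paper proof to compare your attempt against.

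That said, your outline is the correct and standard route, and is almost certainly what \cite{HU} does. The substitution $r=2\sqrt{y}$ converting $y\partial_{yy}+a\partial_y$ into the radial part $\partial_{rr}+\tfrac{2a-1}{r}\partial_r$ of a Laplacian in effective dimension $2a$ is precisely the raising-dimension device this paper already exploits in Lemma \ref{lem201}, and the arithmetic check $\tfrac{(n-1+2a)+2}{(n-1+2a)-2}=\tfrac{n+2a+1}{n+2a-3}$ confirms the reduction targets the right critical exponent. The $C^2(\overline{R^n_+})$ hypothesis makes the Neumann condition $\partial_r v|_{r=0}=0$ automatic (since $\partial_r v=\tfrac{r}{2}u_y$ with $u_y$ bounded), so the boundary flux terms against the weight $r^{2a-1}$ vanish as you indicate. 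For non-integer $2a$ the cylindrical extension to $R^{n-1+2a}$ is only heuristic, and one must run the Gidas--Spruck multiplier estimates directly on the weighted equation; you correctly flag this as the part requiring real work but do not carry it out. As a sketch your proposal is sound; as a proof it would need the weighted-integral computations filled in (or a citation to a version of \cite{GS} that handles Grushin-type weights).
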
This lemma was proved in author's another paper \cite{HU}.

With all the above preparations, we are now in a position to prove Theorem \ref{thm101}.
We shall use the blow up method to prove Theorem \ref{thm101}. If Theorem \ref{thm101} is not valid, we may assume $|u^k|_\infty=M_k\rightarrow \infty $ as $k\rightarrow \infty$. Hence one can choose  $P_k\in \Omega$ such that  $u(P_k)\geq
\frac{M_k}{2}$. There are two cases we shall distinguish with.

Case 1. $d(P_k,\partial\Omega)\geq \delta>0$. This is the non-degenerate  case and it is easy
to deal with. In fact the final blow-up function satisfies \begin{equation*}
\Delta u+ u^\alpha=0 \text{ in } R^n, u(0)\geq \frac 12
\end{equation*}By Lemma \ref{lem401}, we get a contradiction.   For details, we refer to \cite{GS} and \cite{HU}.

Case 2. $d(P_k,\partial\Omega)\rightarrow 0 $ as $k\rightarrow \infty$. $\underset{k\rightarrow\infty}{\lim} P_k=P\in
\partial\Omega$. Without loss of generality, we may assume $\partial_i \phi(P)=0,i=1,...,n-1
,\partial_{n}\phi\neq 0$ and $P$ to be the origin. Let $s_i=x_i,i=1,..,n-1,s_{n}=\phi(x_1,...,x_{n})$ in $B_\delta(0)\cap \Omega$. Equation \eqref{101}
can be transformed to \begin{equation}
\tilde{a}^{ij}\partial_{ij} u^k+\tilde b^i\partial_i u^k+f(
s,u^k)=0\text{ in } B_{\delta'}(0)\cap \{s_n>0\}.
\end{equation} The coefficients are defined the same as in \eqref{212}.   Set $$\lambda_k^{\frac 1{\alpha-1}}M_k=1,p_i=\frac{s_i-s_i^{P_k}}{\sqrt{\lambda_k}},
i=1,...n-1,p_{n}=\frac{s_{n}-s^{P_k}_{n}}{\lambda_k},v^k(p)=\lambda_k^{\frac 1{\alpha-1}}u^k(s)$$ Then $v^k$ should satisfy, in $H_k=B_{\frac{\delta'}{2\sqrt{\lambda_k}}}(0)\cap \{p_n>-\frac{s_n^{P_k}}{\lambda_k}\}$\begin{eqnarray}\label{413}
\lambda_k^{-1}\tilde a^{nn}\partial_{n}^2 v^k&+&2\sum_{i=1}^n\lambda_k^{-\frac 12}\tilde a^{in}\partial_{in} v^k+\sum_{i,j=1}^{n-1}\tilde a^{ij} \partial_{ij} v^k\nonumber\\&+&\sum_{i=1}^{n-1} \lambda_k^{\frac 12}\tilde b^i \partial_i v^k+\tilde b^{n}\partial_{n} v^k+\lambda_k^{\frac{\alpha}{\alpha-1}}f(p,\lambda_k^{-\frac 1{\alpha-1}}v^k)=0
\end{eqnarray}
Then we will have the
following lemma.\begin{lemma}
\label{lem403}In the region $H_k$ considered, one has \begin{eqnarray}&&\label{411}(\tilde{a}^{ij})\geq c_0I,\text{ for }1\leq i,j\leq n-1,c_0>0\\ \label{412}&&\tilde{a}^{in}=A^{in}(p)\lambda_k(p_n+\frac{s_n^{P_k}}{\lambda_k})\text{ for }i=1,...,n\end{eqnarray}where \begin{equation}\label{a111}A^{in}(p)\in C^1(\bar{H}_k),A^{nn}(p',-\frac{s_n^{P_k}}{\lambda_k})>0,
\frac{\tilde{b}^n(p',-\frac{s^{P_k}_n}{\lambda_k})}
{A^{nn}(p',-\frac{s^{P_k}_n}{\lambda_k})}>\frac 32,p'=(p_1,...,p_{n-1}).\end{equation}
\end{lemma}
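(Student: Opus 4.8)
\emph{Overall plan.} My plan is to derive all three assertions from the explicit formulas \eqref{212} together with the behaviour of $\tilde a^{ij},\tilde b^i$ on $\partial\Omega=\{s_n=0\}$, where \eqref{103}--\eqref{104} impose strong structure; the one delicate point is \emph{uniformity in $k$}. Under the anisotropic blow-up $s_i=s_i^{P_k}+\sqrt{\lambda_k}\,p_i$ ($i\le n-1$), $s_n=s_n^{P_k}+\lambda_k p_n$, the ball $\{|p|<\delta'/(2\sqrt{\lambda_k})\}$ is carried into a set of diameter $O(\delta')$ about $s^{P_k}$, and in Case $2$ one has $s^{P_k}\to s(P)=0$ and $\lambda_k\to0$; hence for all large $k$ the push-forward of $H_k$ is contained in one fixed small neighborhood $V$ of $P$ in $\overline\Omega$, on which all estimates of the $x$-coefficients hold with constants independent of $k$. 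Once this is noted each item is a short computation.

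\emph{Boundary vanishing, the non-degenerate block, and the factorization.} Since $(a^{ij})\ge0$ near $\partial\Omega$ by \eqref{104} and $a^{ij}\partial_i\phi\partial_j\phi=0$ on $\partial\Omega$ by \eqref{103}, writing $(a^{ij})=B^{T}B$ gives $B\nabla\phi=0$ on $\partial\Omega$, hence $a^{ij}\partial_j\phi=0$ there for every $i$; with \eqref{212} this yields $\tilde a^{in}(s',0)=0$ for $i=1,\dots,n$. At $P$ the normalization $\partial_i\phi(P)=0$ ($i\le n-1$), $\partial_n\phi(P)\ne0$ turns this into $a^{in}(P)=a^{ni}(P)=0$ ($i\le n-1$) and $a^{nn}(P)=0$, so $(a^{ij})(P)$ is block diagonal and its $(n-1)\times(n-1)$ block has exactly the eigenvalues $\lambda_1,\dots,\lambda_{n-1}\ge c_0$; thus $(a^{ij})_{i,j\le n-1}(P)\ge c_0I$, and by continuity $(a^{ij})_{i,j\le n-1}\ge\tfrac{c_0}{2}I$ on $V$. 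Since $\tilde a^{ij}=a^{ij}$ for $i,j\le n-1$, \eqref{411} follows. For \eqref{412}: each $\tilde a^{in}$ is $C^1$ and vanishes on $\{s_n=0\}$, so Taylor's formula gives $\tilde a^{in}(s',s_n)=s_n\,A^{in}(s)$ with $A^{in}(s)=\int_0^1(\partial_{s_n}\tilde a^{in})(s',ts_n)\,dt$, which is continuous and inherits the regularity of $\partial_{s_n}\tilde a^{in}$ (so $C^1$ once $\partial\Omega$ is slightly smoother than $C^2$, the regularity actually used when the lemmas of \cite{HH} are applied); since $\lambda_k(p_n+s_n^{P_k}/\lambda_k)=s_n$, rewriting this in the $p$-variables is exactly \eqref{412}.

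\emph{The Fichera-type inequality.} Evaluating at the bottom face $p_n=-s_n^{P_k}/\lambda_k$, i.e. $s_n=0$, gives $A^{in}(p',-s_n^{P_k}/\lambda_k)=\partial_{s_n}\tilde a^{in}(s',0)$. For $i=n$: $\tilde a^{nn}\ge0$ vanishes on $\{s_n=0\}$, so $\partial_{s_n}\tilde a^{nn}|_{s_n=0}\ge0$, while the tangential derivatives of $\tilde a^{nn}$ vanish there, so the condition $\nabla(a^{ij}\partial_i\phi\partial_j\phi)\ne0$ of \eqref{103} forces $\partial_{s_n}\tilde a^{nn}|_{s_n=0}>0$; this is $A^{nn}(p',-s_n^{P_k}/\lambda_k)>0$. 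For the last inequality I would invoke the coordinate-invariance of $g$ (Remark \ref{remark1}, proved in \cite{H}): computing $g$ in the $s$-coordinates, where $\phi(s)=s_n$ and $\phi^k=\delta_{kn}$, and discarding the tangential derivatives $\partial_{s_j}\tilde a^{nj}|_{s_n=0}=0$ ($j\le n-1$) since $\tilde a^{nj}(s',0)=0$, one obtains $g=\dfrac{\tilde b^n-\partial_{s_n}\tilde a^{nn}}{\partial_{s_n}\tilde a^{nn}}$ on $\partial\Omega$, i.e. $\dfrac{\tilde b^n(s',0)}{A^{nn}(s',0)}=g+1$. As Theorem \ref{thm101} assumes $g\ge b=\inf_{\partial\Omega}g>\tfrac12$, this ratio is $>\tfrac32$ uniformly near $P$, which is the remaining assertion of \eqref{a111}.

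\emph{Where the difficulty lies.} None of the three steps is deep; the work is organizational. The two points demanding care are: (i) making every constant — the $c_0$ in \eqref{411} and the lower bound $\tfrac32$ in \eqref{a111} — \emph{independent of $k$}, which is why one must check that the push-forwards of all the $H_k$ are trapped in a single neighborhood $V$ of $P$; and (ii) recognizing that the normalization ``$b(0)>\tfrac32$'' required by the degenerate-elliptic a priori estimates of \cite{HH} is precisely the Fichera-number hypothesis $b>\tfrac12$ of Theorem \ref{thm101} once one passes to the $s$-coordinates, via the invariance of $g$ from \cite{H}. The rest is careful bookkeeping of the anisotropic scaling — tangential variables by $\sqrt{\lambda_k}$, the degenerate variable by $\lambda_k$ — through the chain rule that produced \eqref{413}.
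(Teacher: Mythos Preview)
Your argument is correct and follows the same route as the paper: Taylor-factor $\tilde a^{in}$ through its zero set $\{s_n=0\}$ via $A^{in}(s)=\int_0^1(\partial_{s_n}\tilde a^{in})(s',ts_n)\,dt$, read off $A^{nn}>0$ from \eqref{103}, and obtain $\tilde b^n/A^{nn}>\tfrac32$ from \eqref{a101} by computing the invariant $g$ in the $s$-coordinates. Your treatment is in fact more complete than the paper's (which writes out only the $i=n$ case and simply asserts the $C^1$ regularity of $A^{in}$); your caveat that $A^{in}\in C^1$ strictly requires a touch more smoothness of $\phi$ than the nominal $C^2$ is well taken.
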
\begin{proof}
Noting $\tilde{a}^{nn}=a^{ij}\partial_i\phi\partial_j\phi=0$ on $\{s_n=0\}$, we get \begin{eqnarray}\tilde{a}^{nn}(\bar s)&=&\int_0^1\frac{d(a^{ij}\partial_i\phi\partial_j\phi)(
\bar s',t\bar s_n)}{dt}dt=\bar s_n\int_0^{ 1}\partial_{s_n}(a^{ij}\partial_i\phi\partial_j\phi)(\bar s',t\bar s_n)dt \nonumber\\&=&A^{nn}\lambda_k(\bar p_n+\frac{s^{P_k}_n}{\lambda_k}),\text{ where }A^{nn}=\int_0^{1}\partial_{s_n}(a^{ij}\partial_i\phi\partial_j\phi)(\bar s',t\bar s_n)dt.\end{eqnarray}From \eqref{103}, we see that $\nabla(\tilde a^{nn})\neq 0$ on $\{s_n=0\}$ which implies that $\partial_{s_n}\tilde a^{nn}(s',0)>0$ or $A^{nn}(p',-\frac{s^{P_k}_n}{\lambda_k})>0$ in the region considered. The $C^1$ property of $A^{nn}$ follows from the $C^2$ property of $a^{ij},\phi$ immediately. The last term in \eqref{a111} follows from \eqref{a101}.

\end{proof}

By Lemma \ref{lem403}, \eqref{413} changes to \begin{eqnarray}\label{402}
A^{nn}(p_{n}+\frac{s_{n}^{P_k}}{\lambda_k})\partial_{n}^2 v^k&+&2\sum_{i=1}^{n-1}\lambda_k^{\frac 12} A^{in}(p_{n}+\frac{s_{n}^{P_k}}{\lambda_k})\partial_{in} v^k+\sum_{i,j=1}^{n-1}\tilde a^{ij} \partial_{ij} v^k\nonumber\\&+&\sum_{i=1}^{n-1}\lambda_k^{\frac 12}\tilde b^i \partial_i v^k+\tilde b^{n}\partial_{n} v^k+\lambda_k^{\frac{\alpha}{\alpha-1}}f(p,\lambda_k^{-\frac 1{\alpha-1}}v^k)=0
\end{eqnarray}We must take care of the limit of $\frac{s_{n}^{P_k}}{\lambda_k}$.

\underline{Case 2.1}: $\frac{s_{n}^{P_k}}{\lambda_k}\rightarrow \infty$. Set $q_i=p_i,q_{n}=2\sqrt{p_{n}+s_{n}^{P_k}\lambda_k^{-1}}-2\sqrt{s_{n}^{P_k}
\lambda_k^{-1}}$.¡¡Then with $v^k(0)\geq \frac 12$\begin{eqnarray}\label{401}
\partial_{n}^2 v^k&+&2\sum_{i=1}^{n-1} \bar A^{in}(\lambda_k^{\frac 12}q_{n}+\sqrt {s_{n}^{P_k}})\partial_{in} v^k+\sum_{i,j=1}^{n-1}\bar a^{ij} \partial_{ij} v^k\nonumber\\&+&\sum_{i=1}^{n-1} \lambda_k^{\frac 12}\bar b^i \partial_i v^k+\frac{2\bar b^{n}}{q_{n}+2\sqrt{s_{n}^{P_k}\lambda_k^{-1}}}\partial_{n} v^k+\lambda_k^{\frac{\alpha}{\alpha-1}}\bar f(q,\lambda_k^{-\frac 1{\alpha-1}}v^k)=0,\text{ in }J_k
\end{eqnarray}
where $$\bar A^{in}=\frac{A^{in}}{A^{nn}}, \bar a^{ij}=\frac{\tilde a^{ij}}{A^{nn}}, \bar b^i=\frac{\tilde b^i}{A^{nn}},\bar f=\frac{f}{A^{nn}}$$  It is necessary to show $J_k$ can be chosen arbitrary large as $k\rightarrow \infty$. Since $p\in H_k$, it follows that \begin{eqnarray}
&&|q'|^2+\left[\left(\frac{q_n}{2}+\sqrt{\frac{s_n^{P_k}}{\lambda_k}}\right)^2-
\frac{s_n^{P_k}}{\lambda_k}\right]^2<\frac{\delta'^2}{4\lambda_k}\iff |q'|^2+q_n^2\left(\frac{q_n}{4}+\sqrt{\frac{s_n^{P_k}}{\lambda_k}}\right)^2<
\frac{\delta'^2}{4\lambda_k}
\nonumber\\&&\Rightarrow q_n<\frac{\delta'}{\sqrt{s_n^{P_k}}} \text{ noting that } q_n>-2\sqrt{\frac{s_n^{P_k}}{\lambda_k}}.
\end{eqnarray}From $s_n^{P_k}\rightarrow 0$ and $\frac{s_n^{P_k}}{\lambda_k}\rightarrow \infty$, one can get\begin{equation}
\frac{\delta'^2}{4\lambda_k}=4l_k^2{\max} ^2\left\{\frac{\delta'}{4\sqrt{s_n^{P_k}}},
\sqrt{\frac{s_n^{P_k}}{\lambda_k}}\right\},l_k\rightarrow
\infty\text{ as }k\rightarrow \infty.
\end{equation}Since \begin{eqnarray}
|q'|^2+q_n^2\left(\frac{q_n}{4}+\sqrt{\frac{s_n^{P_k}}{\lambda_k}}\right)^2\leq |q'|^2+4{\max} ^2\left\{\frac{\delta'}{4\sqrt{s_n^{P_k}}},
\sqrt{\frac{s_n^{P_k}}{\lambda_k}}\right\}q_n^2,
\end{eqnarray} we can take $J_k=B_{l_k}(0)\cap\{q_n>-2\sqrt{\frac{s_n^{P_k}}{\lambda_k}}\}$. For any fixed $R>0$, $\exists k_R$, for $k\geq k_R$, one has $B_R(0)\subset J_k$. Hence, \eqref{401} is valid in $B_R(0)$ for $k\geq k_R$. As $\lambda I\leq\{\bar a^{ij}\}\leq \Lambda I$ and $s_n^{P_k},\lambda_k\rightarrow 0$, \eqref{401} is uniformly elliptic in $B_R(0)$. Noting $$|\lambda_k^{\frac 12}\bar b^i|_{C^0(B_R(0))}+\left|\frac{2\bar b^{n}}{q_{n}+2\sqrt{s_{n}^{P_k}\lambda_k^{-1}}}\right|_{C^0(B_R(0))}+|\lambda_k
^{\frac{\alpha}{\alpha-1}}\bar f(q,\lambda^{-\frac 1{\alpha-1}}v^k)|_{\infty}\leq C\text{ independent of } k,$$ by standard elliptic equation theorem, one can get $\|v^k\|_{W^{2,p}(B_R(0))}\leq C\Rightarrow \|v^k\|_{C^{1,\beta}(\overline{B_R(0)})}\leq C$ uniformly, with $\beta=1-\frac np$. Thus, passing $k\rightarrow \infty$, we get
the limit equation \begin{equation}
\Delta v+v^\alpha=0 \text{ in } R^{n}, v(0)\geq \frac 12,\alpha<\frac{n+2a+1}{n+2a-3}<\frac{n+2}{n-2}
\end{equation} By Lemma \ref{lem401}, the above equation only admits trivial solution which is a contradiction to $v(0)\geq \frac 12$

\underline{Case 2.2}: $\frac{s_{n}^{P_k}}{\lambda_k}\rightarrow c_0<\infty$.
Set $q_i=p_i,i=1,...,n-1,q_n=p_n+\frac{s_{n}^{P_k}}{\lambda_k}$. Then \eqref{402} transforms to in $B_{\frac{\delta'}{2\sqrt\lambda_k}}((0',\frac{s_n^{P_k}}{\lambda_k}))\cap R^n_+$ with $v^k(0',\frac{s_n^{P_k}}{\lambda_k})\geq \frac 12$,\begin{eqnarray}\label{403}
A^{nn}q_{n}\partial_{n}^2 v^k&+&2\sum_{i=1}^{n-1}\lambda_k^{\frac 12} A^{in}q_{n}\partial_{in} v^k+\sum_{i,j=1}^{n-1}\tilde a^{ij} \partial_{ij} v^k\nonumber\\&+&\sum_{i=1}^{n-1}\lambda_k^{\frac 12}\tilde b^i \partial_i v^k+\tilde b^{n}\partial_{n} v^k+\lambda_k^{\frac{\alpha}{\alpha-1}}f(q,\lambda_k^{-\frac 1{\alpha-1}}v^k)=0
\end{eqnarray}As $\frac{s_n^{P_k}}{\lambda_k}$ is uniformly bounded, we may assume \eqref{403} is satisfied in $B_{\frac{\delta'}{4\sqrt\lambda_k}}(0)\cap R^n_+$.

\par (a):$c_0>0$, by choosing $\delta$ small enough, \eqref{403} is uniformly elliptic in $B_{\delta}(0',c_0)$, hence, $$\|v^k\|_{W^{2,p}(B_{\delta}(0',c_0))}\leq C\Rightarrow \|v^k\|_{C^{1,\beta}(\overline{B_{\delta}(0',c_0)})}\leq C.$$  \par (b):$c_0=0$, by Theorem \ref{thm102}, one can see  that $$
|v^k(0',\frac{s_n^{P_k}}{\lambda_k})-v^k(0)|\leq C\left(\frac{s_n^{P_k}}{\lambda_k}\right)^{\alpha}$$ Since $\frac{s_n^{P_k}}{\lambda_k}\rightarrow 0$ as $k\rightarrow \infty$, it follows that $v^{k}(0)\geq \frac 14$ for $k$ large enough.
Again by Theorem \ref{thm102}, one can get $v^k(q)\geq \frac 18$, $\forall q\in B_{\tilde\delta}(0)\cap R^n_+$ for some $\tilde\delta$ small.

Therefor, by the above arguments, passing the limit $k\rightarrow \infty$, one can get $$v(0,c_0)=\underset{k\rightarrow \infty}{\lim}v^k(0',\frac{s_n^{P_k}}{\lambda_k})\geq \frac 12,c_0>0,$$ or,$$
v(q)\geq \frac 18,\forall q\in B_{\tilde\delta(0)}\cap R^n_+, c_0=0
$$ and $v$ satisfies
$$A^{nn}(P)q_n\partial^2_n v+\sum_{i,j=1}^{n-1}\tilde a^{ij}(P)\partial_{ij}v+\tilde b^n(P)\partial_n v+h(P)v^\alpha=0\text{ in }R^n_+$$ where $\frac{\tilde b^n(P)}{A^{nn}(P)}>\frac 32$ by \eqref{a101}. By a rotating and a stretching of the coordinates, we have that \begin{equation}
q_n\partial_n^2 v+\Delta_{n-1} v+\bar b\partial_n v +v^\alpha=0\text{ in }R^n_+, a\geq \bar b>\frac 32
\end{equation}Since $\alpha<\frac{n+2a+1}{n+2a-3}\leq \frac{n+2\bar b+1}{n+2\bar b-3}$,by Lemma \ref{lem402}, the above equation admits only trivial solution which is a contradiction. This completes the proof of Theorem \ref{thm101}.


\begin{thebibliography}{}



\bibitem{CGS}
L.Caffarelli, B.Gidas, J.Spruck, Asymptotic symmetry and local
behavior of semilinear elliptic with critical Sobolev growth,
Comm.Pure Appl.Math., 42(1989), pp.271-297.




\bibitem{Fic1}
G.Fichera, Sulle equazioni differenziali lineari ellittico-paraboliche del secondo ordine, Atti Accad.Naz.Lincei.Mem.Cl.Sci.Fis.Mat.Nat.Sez.I(8) 5 (1956), 1-30.

\bibitem{Fic2}
G.Fichera, ``On a unified theory of boundary value problems for elliptic-parabolic equations of second order,"  in Boundary problems. Differential equations, Univ. of Wisconsin Press, Madison, Wis., 1960, pp.97-120.


\bibitem{G}
M.Giaquinta, Introduction to regularity theory for nonlinear elliptic systems, Lectures in Mathematics ETH Z\"urich. Birkh\"auser Verlag, Basel, 1993

\bibitem{GS}
B.Gidas, J.Spruck, Global and local behavior of positive solutions
of nonlinear elliptic equations, Comm. Pure Appl. Math. 34(1981),
no.4, 525-598.

\bibitem{GS2}

B.Gidas, J.Spruck, A priori bounds for positive solutions of nonlinear elliptic equations. Comm. Partial Differential Equations 6 (1981), no. 8, 883-901.

\bibitem{H}
J.X.Hong, On boundary value problems for mixed equations with characteristic degenerate surfaces, Chin. Ann. of Math., 1981, Vol.2, no.4, 407-424.

\bibitem{HH}
J.X.Hong, G.G.Huang, $L^p$ and H\"older estimates for a class of
degenerate elliptic partial differential equations and its
applications, Int Math Res Notices (2012) Vol.2012 Issue 13 2889-2941

\bibitem{HU}
G.G.Huang, A Liouville theorem of degenerate elliptic equation and its application,http://arxiv.org/abs/1211.2520

\bibitem{Kel}
M.V.Keldy\text{$\check{s}$}, On certain cases of degeneration of equations of elliptic type on the boundary of a domain, Dokl.Akad.Nauk SSSR 77(1951),181-183.








\bibitem{O}
Oleinik, O. A.; Radkevic, E. V. Second order equations with nonnegative characteristic form. Translated from the Russian by Paul C. Fife. Plenum Press, New York-London, 1973.
\end{thebibliography}
\end{document}